\newtheorem{theorem}{Theorem}
\newtheorem{corollary}[theorem]{Corollary}
\newtheorem{lemma}{Lemma}
\theoremstyle{remark}
\title[\title{On the  exponential Diophantine equation ...}]{On the  exponential Diophantine equation related to powers of two consecutive terms of Lucas sequences}
\author[M. Ddamulira and F. Luca]{Mahadi Ddamulira and  Florian Luca}
\subjclass[2010]{11B39, 11D45, 11J86}
\keywords{Lucas sequences, Linear forms in logarithms, Baker's method}
\address{Mahadi Ddamulira \newline
         \indent Institute of Analysis and Number Theory, Graz University of Technology, \newline
        \indent Kopernikusgasse 24/II, \newline
         \indent A-8010 Graz, Austria}
\address{Max Planck Institute for Mathematics \newline
\indent Bonn, Germany}
\email{mddamulira\char'100tugraz.at; mahadi\char'100aims.edu.gh}
\address{Florian Luca \newline
         \indent School of Mathematics, University of the Witwatersrand, \newline
        \indent Johannesburg, South Africa}
\address{Research Group in Algebraic Structures and Applications, King Abdulaziz University,\newline 
         \indent Jeddah, Saudi Arabia}
\address{Max Planck Institute for Mathematics \newline
\indent Bonn, Germany}           
\address{Centro de Ciencias Matem\'aticas UNAM\newline
\indent Morelia, Mexico}
\email{Florian.Luca\char'100wits.ac.za}
\begin{document}


\begin{abstract}
Let  $r\ge 1$ be an integer and ${\bf U}:=(U_{n})_{n\ge 0} $ be the Lucas sequence given by $U_0=0$, $U_1=1, $ and $U_{n+2}=rU_{n+1}+U_n$, for all $ n\ge 0 $. 
In this paper, we show that there are no positive integers $r\ge 3,~x\ne 2,~n\ge 1$ such that $U_n^x+U_{n+1}^x$ is a member of ${\bf U}$. 
\end{abstract}

\maketitle

\section{Introduction}
Let $r\ge 1$ be an integer and ${\bf U}:=(U_{n})_{n\ge 0} $ be the Lucas sequence given by $ U_0=0, ~ U_1=1, $ and 
\begin{align}
U_{n+2}=rU_{n+1}+U_n
\end{align} for all $ n\ge 0$. When $r=1$ ${\bf U}$ coincides with the Fibonacci sequence while when $r=2$ ${\bf U}$ coincides with the Pell sequence. It is well-known that 
\begin{eqnarray}\label{Prob1}
U_n^2+U_{n+1}^2 = U_{2n+1} \quad \text{for all} \quad n\ge 0.
\end{eqnarray}
In particular, the identity \eqref{Prob1} tells us that the sum of the squares of two consecutive terms of ${\bf U}$ is also a term of ${\bf U}$.
When $r=1$, we even have $U_n+U_{n+1}=U_{n+2}$ for all $n\ge 0$ since ${\bf U}$ is the Fibonacci sequence. 
We thus consider the Diophantine equation
\begin{eqnarray}\label{Prob2}
U_n^x+U_{n+1}^x=U_m
\end{eqnarray}
in nonnegative integers $(n,m,x)$ which by Eq. \eqref{Prob1} has the parametric solution $m=2n+1$ when $x=2$ for any $r\ge 1$ and even the parametric solution $m=n+2$ when 
$x=1$ if $r=1$. For $r=1$ Luca and Oyono \cite{Luca1} proved that Eq. \eqref{Prob2} has no positive integer solutions $ (n,m,x)  $ with $n\ge 2 $ and $x\ge 3 $. Rihane et al. \cite{Luca2} studied Eq. \eqref{Prob2} when $r=2$ and proved that there is no positive integer solution $(n,m,x)$  of it with $x\ne 2$. In the same spirit, G\'omez Ruiz and Luca \cite{Gomez} studied Eq. \eqref{Prob2} with $ {\bf U} = \{F_n^{(k)}\}_{n\ge -(k-2)} $, which is the $k-$generalized Fibonacci sequence of recurrence
\begin{align*}
F_n^{(k)}= F_{n-1}^{(k)}+F_{n-2}^{(k)}+\cdots+F_{n-k}^{(k)} \quad \text{for all} \quad n\ge 2
\end{align*}
with the initial conditions $ F_{-(k-2)}^{(k)} =F_{-(k-2)}^{(k)}=\cdots=F_{0}^{(k)} =0$ and $F_1^{(k)}=1$. When $k=2$, this sequence coincides with the sequence of 
Fibonacci numbers. They proved that Eq. \eqref{Prob2} has no positive integer solution $(k, n, m, x) $ with $ k\ge 3 $, $n\ge 2$, and $x\ge 1$. Another related result involving the balancing numbers was studied by Rihane et al. in \cite{Luca3}.

In this paper, we study Eq. \eqref{Prob2} in nonnegative integers $(r,n,m,x)$ treating $r$ as an integer parameter. We may assume that $r\ge 3$ since the cases $r\in \{1,2\}$ have been treated already in \cite{Luca1} and \cite{Luca2}, respectively. The solution with $(n,m)=(0,1)$ (for any $r$ and $x$) is obvious so we omit it and suppose that $n$ is positive. Our main result is the following.
\begin{theorem}\label{ProbM}
There is no positive integer solution $(r,n,m,x)$ of Diophantine equation \eqref{Prob2} with $r\ge 3$ and $x\ne 2$. 
\end{theorem}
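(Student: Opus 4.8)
The plan is to bound the exponents $n$ and $x$ by a fixed power of $\log r$ using linear forms in logarithms (Matveev's theorem), and then to exploit the fact that each $U_k$ is a monic polynomial in $r$ of degree $k-1$ in order to eliminate all sufficiently large $r$; the finitely many remaining values of $r$ are then handled by the now-standard reduction, exactly as in the cases $r\in\{1,2\}$ treated in \cite{Luca1,Luca2}.

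First I would dispose of $x=1$ elementarily: since $r\ge 3$ gives $U_{n+2}=rU_{n+1}+U_n>U_{n+1}+U_n>U_{n+1}$, the quantity $U_n+U_{n+1}$ lies strictly between the consecutive terms $U_{n+1}$ and $U_{n+2}$ and is therefore never a member of $\mathbf U$. From now on $x\ge 3$. I would then record Binet's formula $U_k=(\alpha^k-\beta^k)/(\alpha-\beta)$, where $\alpha=(r+\sqrt{r^2+4})/2$, $\beta=-1/\alpha$ and $a:=\alpha-\beta=\sqrt{r^2+4}$, together with $\alpha^{k-2}\le U_k\le \alpha^{k-1}$; crucially $\alpha$ and $a$ generate the same quadratic field, so every linear form below has degree $D=2$ and $h(\alpha),h(a)\ll\log r$. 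Comparing sizes in $U_{n+1}^x<U_m<2U_{n+1}^x$ gives $|m-(n+1)x|\ll x$, which is all that is needed to control the coefficients of the forms below; the exact value $m=nx+1$ will drop out of the large-$r$ analysis.

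Next come two applications of Matveev's bound. Dividing $U_m=U_n^x+U_{n+1}^x$ by $U_{n+1}^x$ and inserting Binet's formula yields
\[
\bigl|(x-1)\log a+(m-(n+1)x)\log\alpha\bigr|\ \ll\ \alpha^{-x},
\]
a two-term form whose coefficients are both $O(x)$; since its value is comparable to $\alpha^{-x}$, Matveev forces $x\ll(\log r)\log x$, hence $x\ll(\log r)\log\log r$. To bound $n$ I would peel off one further term: writing the same ratio as $1+(U_n/U_{n+1})^x=1+\alpha^{-x}\bigl(1+O(x\alpha^{-2n})\bigr)$ produces the three-term form
\[
\Lambda:=(x-1)\log a+(m-(n+1)x)\log\alpha-\log\bigl(1+\alpha^{-x}\bigr),\qquad |\Lambda|\ll x\,\alpha^{-2n}.
\]
Here $h(1+\alpha^{-x})\ll x\log r$ and $x$ is already bounded, so Matveev now gives $n\ll x(\log r)^2\log x$. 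Thus both $n$ and $x$ are bounded by a fixed power of $\log r$.

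It remains to remove the parameter $r$, which is the crux of the argument. Because $n,x\ll\operatorname{polylog}(r)$, the ratio $n/r^2$ is eventually tiny, so the expansions $U_{n+1}^x=r^{nx}\bigl(1+x(n-1)r^{-2}+\cdots\bigr)$, $U_n^x=r^{nx-x}(1+\cdots)$ and $U_{nx+1}=r^{nx}+(nx-1)r^{nx-2}+\cdots$ are all governed by their leading terms with fully controlled error. Matching leading terms in $U_m=U_{n+1}^x+U_n^x$ forces $m=nx+1$, and then matching the coefficient of $r^{nx-2}$ forces $nx-1=x(n-1)$, i.e.\ $x=1$, since for $x\ge 3$ the contribution $r^{nx-x}$ of $U_n^x$ is of strictly lower order; hence the surplus $(x-1)r^{nx-2}$ cannot be absorbed and no solution exists once $r$ exceeds an absolute constant $r_0$. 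For the finitely many $r\in\{3,\dots,r_0\}$ the displayed bounds make $n$ and $x$ effectively bounded, and I would finish by the Baker--Davenport (continued-fraction) reduction. The main obstacle is exactly this reconciliation of the two regimes: one must verify that the polylogarithmic bounds on $n,x$ coming from Matveev are small enough relative to $r$ for the leading-coefficient comparison to remain valid beyond an explicit, absolute threshold $r_0$, so that no large $r$ escapes and only a genuinely finite computation is left.
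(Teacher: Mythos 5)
Your overall architecture (Baker-type bounds polylogarithmic in $r$, elimination of large $r$ via the polynomial structure of $U_k(r)$, reduction for the finitely many remaining $r$) is reasonable in outline, and your coefficient-matching idea for large $r$ is an attractive discrete analogue of what the paper does analytically via $\log\alpha=\log r+1/r^2+O(r^{-4})$ and $\log\sqrt{r^2+4}=\log r+2/r^2+O(r^{-4})$. But there is a genuine gap at the step on which everything else rests: the claimed estimate $\bigl|(x-1)\log\sqrt{r^2+4}+(m-(n+1)x)\log\alpha\bigr|\ll\alpha^{-x}$. Replacing $U_{n+1}$ by $\alpha^{n+1}/\sqrt{r^2+4}$ introduces an error $\log\bigl(1-(\beta/\alpha)^{n+1}\bigr)\asymp\alpha^{-2n-2}$ multiplied by $x$, so the honest bound is $\ll\alpha^{-x}+x\alpha^{-2n}$ (this is exactly the paper's Eq.~\eqref{eq:bound2}), and the second term dominates as soon as $x$ exceeds roughly $2n$. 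In that regime your two-term Matveev application yields only $n\ll(\log r)\log x$, and your three-term form, whose third height is $h(1+\alpha^{-x})\ll x\log r$, yields $n\ll x(\log r)^2\log x$, which is vacuous; neither bounds $x$. So the conclusion ``$x\ll(\log r)\log\log r$, hence $n,x\ll\operatorname{polylog}(r)$'' is not established --- and $x\gg n$ is precisely the dangerous regime: the paper shows (Lemmas \ref{Luca1} and \ref{lem:better}) that a putative solution with $r\ge4$ must have $\kappa:=nx+1-m\ge2$ and $x>(\kappa r^2\log r+1)/(1+5/r)$, i.e., $x$ enormous compared to $r$ even when $n=2$. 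Without the polylog bounds your large-$r$ elimination collapses as well: matching the coefficient of $r^{nx-2}$ needs $m^2/r^2$ and $(nx)^2/r^2$ small, and the size-forcing of $m=nx+1$ fails once $x/r^2$ is not small, since $\alpha^{m-1}/r^{m-1}\approx e^{m/r^2}$ can absorb a deficit of a factor $r^{nx+1-m}$ --- which is exactly why the paper finds $m\le nx-1$ rather than $m=nx+1$ for genuine solutions.

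What fills this hole in the paper, and is absent from your proposal, is the linear form in \emph{three} logarithms with $\gamma_3:=U_{n+1}$ itself (height $\asymp n\log r$), namely $|m\log\alpha-\log\sqrt{r^2+4}-x\log U_{n+1}|<2.2/r^x$, to which Mignotte's kit is applied to obtain $x\ll n(\log r)^2(\log(\cdot))^2$ (Lemma \ref{lem:boundsonx}) --- a bound on $x$ in terms of $n$ valid with no assumption on their relative sizes. Only with this in hand does the paper prove $x<r^{n-2}$ for $n>100$ (Lemma \ref{lemalphaton}), which is precisely the hypothesis you would need for your two-term form to be honestly small and usable. You would further need the $\kappa$-analysis via primitive divisors and the identities \eqref{squares}, \eqref{Pell}, \eqref{addition} (giving $\kappa\ne1$ and $x>n/2$), a separate argument for $n=1$ (the paper uses Carmichael's Primitive Divisor Theorem on $1+r^x=U_m$ uniformly in $r$, which your size argument does not cover), and a special treatment of $r=3$, where $\kappa>0$ is unknown and the paper must invoke the divisibility theorem of Bilu et al.\ to bound $n$ when $x$ is small; note also that the final Baker--Davenport and Legendre reductions for bounded $r$ presuppose absolute bounds on $n$ and $x$, which again trace back to the broken step. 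As it stands, the proposal does not survive the regime of small $n$ and large $x$, so the reduction to finitely many $r$ does not go through.
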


\section{Preliminary Results}
\subsection{The Lucas sequence}
Let $$ (\alpha, \beta):= \left(\dfrac{r+\sqrt{r^2+4}}{2},  \dfrac{r-\sqrt{r^2+4}}{2}\right), $$
be the roots of the characteristic equation $ x^2-rx-1=0 $ of the Lucas sequence ${\bf U}=(U_n)_{n\ge 0}$. We put $\Delta=r^2+4=(\alpha-\beta)^2$
for the discriminant of the above quadratic equation. The Binet formula for the general term of ${\bf U}$ is given by

\begin{eqnarray}\label{Prob3}
U_n := \dfrac{\alpha^{n}-\beta^{n}}{\alpha-\beta} \quad \text{for all} \quad n\ge 0.
\end{eqnarray}
One may prove by induction that the inequality
\begin{eqnarray}\label{Prob4}
\alpha^{n-2}\le U_n \le \alpha^{n-1}
\end{eqnarray}
holds for all positive integers $n$.  It is also easy to show that the inequality
\begin{eqnarray}\label{Prob5}
\dfrac{U_n}{U_{n+1}}<\dfrac{1}{r}
\end{eqnarray}
holds for all $n\ge 2$. Indeed, it follows from $U_{n+1}=rU_n+U_{n-1}>rU_n$ for $n\ge 2$. At one point of the argument we will need the companion Lucas sequence 
${\bf V}:=\{V_n\}_{n\ge 0}$ given by $V_0=2,~V_1=r$, and $V_{n+2}=rV_{n+1}+V_n$ for all $n\ge 0$. Its Binet formula is 
\begin{equation}
\label{eq:BinetV}
V_n=\alpha^n+\beta^n\quad {\text{\rm for~all}}\quad n\ge 0.
\end{equation}
There are many relations between members of ${\bf U}$ and ${\bf V}$ such as 
\begin{eqnarray}
U_{2n} & = & U_nV_n;\label{double}\\
U_{n+1}^2-U_nU_{n+2} & = & (-1)^n;\label{squares}\\
V_n^2-\Delta U_n^2 & = & 4(-1)^n;\label{Pell}\\
2U_{m+n} & = & U_mV_n+U_nV_m;\label{addition}\\
\gcd(U_n,U_m) & = & U_{\gcd(m,n)}.\label{gcd}
\end{eqnarray}
We record another one. All such identities follow easily from the  Binet formulas Eq. \eqref{Prob3} and Eq. \eqref{eq:BinetV} of ${\bf U}$ and ${\bf V}$ respectively. 

\begin{lemma}
\label{lem:1}
If $n$ is odd then
\begin{equation}
\label{eq:Unminus1}
U_n-1=\left\{\begin{matrix} U_{(n-1)/2}V_{(n+1)/2} & {\text{if}} & n\equiv 1\pmod 4;\\
U_{(n+1)/2}V_{(n-1)/2} & {\text{if}} & n\equiv 3\pmod 4.\end{matrix}\right.
\end{equation}
\end{lemma}

Both sequences ${\bf U}$ and ${\bf V}$ can be extended to negative indices either by allowing $n$ to be negative in the Binet formulas Eq. \eqref{Prob3} and \eqref{eq:BinetV}
or simply by using the recurrence relation to extend ${\bf U}$ to negative indices via $U_{-n}=-rU_{-(n-1)}+U_{-(n-2)}$ for all $n\ge 1$. The same applies to ${\bf V}$. All the above formulas 
given in Eq. \eqref{double}, \eqref{squares}, \eqref{Pell}, \eqref{addition}, and \eqref{eq:Unminus1} hold when the indices are arbitrary integers, not necessarily nonnegative.

The following lemma is useful. For further details we refer the reader to the book of Koshy \cite{TK}.
\begin{lemma}\label{Fib1}
Let $ \{U_n(r)\}_{n\ge 0} \subseteq {\bf Z}[r]$ be the sequence of polynomials defined by $ U_0(r)=0 $, $ U_1(r)=1 $, and 
\begin{align*}
U_{n+2}(r)= rU_{n+1}(r)+U_{n}(r), \quad \text{for all } \quad n\ge 0.
\end{align*}
Then,
\begin{align}\label{Fib2}
 U_{n}(r) = \sum_{\substack{0\le k\le n  \\ k\not \equiv n \pmod 2}}\binom{\frac{n+k-1}{2}}{k}r^k.
\end{align}
\end{lemma}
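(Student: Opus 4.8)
The plan is to prove the closed form by induction on $n$, using the defining recurrence $U_{n+2}(r)=rU_{n+1}(r)+U_n(r)$ together with Pascal's rule. First I would record the parity bookkeeping that underlies the whole computation: the restriction $k\not\equiv n\pmod 2$ is exactly the condition that $n+k-1$ be even, hence that $\frac{n+k-1}{2}$ be a nonnegative integer, so every binomial coefficient appearing in the claimed formula is well defined. Next I would verify the two base cases needed for the two-step recurrence. For $n=0$ the only admissible index would be $k=0$, but $0\equiv 0\pmod 2$ is excluded, so the sum is empty and equals $U_0(r)=0$; for $n=1$ only $k=0$ survives, giving $\binom{0}{0}r^0=1=U_1(r)$.

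For the inductive step I would assume the formula for $n$ and $n+1$ and compare the coefficient of $r^k$ on the two sides of $U_{n+2}(r)=rU_{n+1}(r)+U_n(r)$. A power $r^k$ occurs in $U_{n+2}(r)$ only when $k\not\equiv n+2\equiv n\pmod 2$, and for such $k$ the claimed coefficient is $\binom{\frac{n+k+1}{2}}{k}$. On the right, $rU_{n+1}(r)$ contributes the coefficient of $r^{k-1}$ in $U_{n+1}(r)$, namely $\binom{\frac{n+k-1}{2}}{k-1}$, while $U_n(r)$ contributes $\binom{\frac{n+k-1}{2}}{k}$, both indices having the correct parity precisely when $k\not\equiv n\pmod 2$. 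Pascal's rule then gives
\[
\binom{\tfrac{n+k-1}{2}}{k-1}+\binom{\tfrac{n+k-1}{2}}{k}=\binom{\tfrac{n+k-1}{2}+1}{k}=\binom{\tfrac{n+k+1}{2}}{k},
\]
which matches the left-hand side and closes the induction.

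The one genuinely delicate point — and I expect it to be the main (if minor) obstacle — is the behaviour at the extreme indices of the summation, which must be absorbed by the conventions $\binom{m}{-1}=0$ and $\binom{m}{k}=0$ for $k>m\ge 0$. At $k=0$ the contribution from $rU_{n+1}(r)$ is $\binom{\cdot}{-1}=0$, reflecting that $U_{n+1}(r)$ has no $r^{-1}$ term; at the top the surviving index is $k=n+1$ in both parities of $n$, and the coefficient reduces to $\binom{n+1}{n+1}=1$, consistent with $U_{n+2}(r)$ being monic of degree $n+1$. I would check these endpoints explicitly so that the single identity above covers every $k$ uniformly.

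As a self-contained alternative that sidesteps the endpoint bookkeeping, I could instead use the generating function $\sum_{n\ge 0}U_n(r)t^n=t/(1-rt-t^2)$. Expanding $1/\bigl(1-(rt+t^2)\bigr)=\sum_{j\ge 0}(rt+t^2)^j$ by the binomial theorem and extracting the coefficient of $t^n$ yields the substitution $n=j+i+1$, $k=j-i$, whence $j=\frac{n+k-1}{2}$ and the contribution $\binom{j}{i}=\binom{j}{k}=\binom{\frac{n+k-1}{2}}{k}$; the parity condition $k\not\equiv n\pmod 2$ then emerges automatically from requiring $i$ and $j$ to be nonnegative integers. Either route is routine once the indices are set up, but I would present the induction as the cleaner and more elementary of the two.
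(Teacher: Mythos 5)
Your proof is correct. Note first that the paper itself gives no argument for this lemma at all: it simply states the formula and refers the reader to Koshy's book \cite{TK}, so there is no in-paper proof to compare against, and your write-up supplies a self-contained verification that the paper outsources. Your induction is the standard one and all the details check out: the parity condition $k\not\equiv n\pmod 2$ is exactly what makes $\frac{n+k-1}{2}$ a nonnegative integer; the base cases $n=0$ (empty sum) and $n=1$ (single term $\binom{0}{0}=1$) are right; the coefficient comparison in $U_{n+2}(r)=rU_{n+1}(r)+U_n(r)$ has the correct parity bookkeeping (the shift $k\mapsto k-1$ against the shift $n+1$ preserves the condition $k\not\equiv n\pmod 2$); and Pascal's rule
\[
\binom{\tfrac{n+k-1}{2}}{k-1}+\binom{\tfrac{n+k-1}{2}}{k}=\binom{\tfrac{n+k+1}{2}}{k}
\]
closes the step. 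You were right to flag the endpoints as the only delicate point, and your handling is sound: the convention $\binom{m}{-1}=0$ absorbs $k=0$, and at $k=n+1$ the identity degenerates to $\binom{n}{n}+\binom{n}{n+1}=1=\binom{n+1}{n+1}$, confirming that $U_{n+2}$ is monic of degree $n+1$. Your generating-function alternative, extracting the coefficient of $t^n$ from $t/(1-rt-t^2)=t\sum_{j\ge 0}(rt+t^2)^j$ via $j=\frac{n+k-1}{2}$, is also correct and has the aesthetic advantage that the parity restriction falls out of integrality of the indices rather than being imposed; either version would serve as a complete replacement for the citation.
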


Note that the summation range above is only over these $k\in [0,n]$ which have different parity than $n$. We record the following easy but useful consequence of the formula Eq. \eqref{Fib2}.

\begin{lemma}
\label{lem:2}
We have $r\mid U_n$ if $n$ is even and $r\mid U_{n}-1$ if $n$ is odd. 
\end{lemma}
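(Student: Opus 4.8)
The plan is to read off both divisibility statements directly from the closed form in Lemma~\ref{Fib1}. The key observation is that the summation in Eq.~\eqref{Fib2} runs only over indices $k$ whose parity differs from that of $n$, so the parity of $n$ itself controls which powers of $r$ can appear as summands. Everything then reduces to a parity inspection.

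First I would treat the case when $n$ is even. Then every admissible index $k$ in Eq.~\eqref{Fib2} is odd, hence satisfies $k\ge 1$, so each summand $\binom{(n+k-1)/2}{k}r^k$ carries at least one factor of $r$. Since the binomial coefficients are integers, the entire sum is an integer multiple of $r$, which gives $r\mid U_n$.

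Next I would treat the case when $n$ is odd. Then every admissible $k$ is even. I isolate the term $k=0$, which equals $\binom{(n-1)/2}{0}r^0=1$, and observe that all remaining admissible indices satisfy $k\ge 2$. Those summands are therefore divisible by $r^2$, and \emph{a fortiori} by $r$. Writing $U_n=1+r\cdot t$ for a suitable integer $t$ then yields $r\mid U_n-1$, as claimed.

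I do not anticipate any genuine obstacle: the whole argument is a single parity inspection of Eq.~\eqref{Fib2}, with no estimates or inductions required beyond the polynomial identity already supplied by Lemma~\ref{Fib1}. The only point meriting a moment's care is confirming, in the odd case, that the constant term is exactly the $k=0$ summand and that no other admissible index contributes a term coprime to $r$; both are immediate once one notes that the smallest admissible even index exceeding $0$ is $2$.
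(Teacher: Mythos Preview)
Your proof is correct and is exactly the argument the paper intends: the lemma is stated there as an immediate consequence of the polynomial formula Eq.~\eqref{Fib2}, and your parity inspection of the admissible indices $k$ is precisely how that consequence is extracted.
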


\subsection{Logarithmic height}
Let $\eta$ be an algebraic number of degree $d$ with minimal primitive polynomial over the integers
$$
a_0x^{d}+ a_1x^{d-1}+\cdots+a_d = a_0\prod_{i=1}^{d}(x-\eta^{(i)}),
$$
where the leading coefficient $a_0$ is positive and the $\eta^{(i)}$'s are the conjugates of $\eta$. The \textit{logarithmic height} of $\eta$ is given by
$$ 
h(\eta):=\dfrac{1}{d}\left( \log a_0 + \sum_{i=1}^{d}\log\left(\max\{|\eta^{(i)}|, 1\}\right)\right).
$$
In particular, if $\eta=p/q$ is a rational number with $\gcd (p,q)=1$ and $q>0$, then $h(\eta)=\log\max\{|p|, q\}$. The following are some of the properties of the logarithmic height function $h(\cdot)$, which will be used in the next sections of this paper without reference:
\begin{equation}
\label{eq:heights}
\begin{aligned}
h(\eta\pm \gamma) &\leq h(\eta) +h(\gamma) +\log 2,\\
h(\eta\gamma^{\pm 1})&\leq  h(\eta) + h(\gamma),\\
h(\eta^{s}) &= |s|h(\eta) \quad (s\in\mathbb{Z}). 
\end{aligned}
\end{equation}

\subsection{Linear forms in logarithms and continued fractions}
The following result on linear forms in three logarithms is due to Mignotte \cite{Mignotte}. The result is more general (i.e., the conditions on the parameters involved are somewhat more general), but we will only quote it in the form that we need.
\begin{theorem}\label{Mignotte1}
Consider three algebraic numbers $ \gamma_1, \gamma_2 $, and $ \gamma_3 $, which are all real, greater than $ 1 $ and multiplicatively independent. Put
\begin{align*}
\mathcal{D}:=[\mathbb{Q}(\gamma_1, \gamma_2, \gamma_3 ): \mathbb{Q}].
\end{align*}
Let $ b_1, b_2, b_3 $ be coprime positive integers and consider 
\begin{align*}
\Gamma:=b_2\log\gamma_2 - b_1\log\gamma_1-b_3\log\gamma_3.
\end{align*}
Put
\begin{align*}
d_1:=\gcd (b_1, b_2) = \frac{b_1}{b_1^{'}}=\frac{b_2}{b_2^{'}}, \quad d_3:=\gcd(b_3, b_2)=\frac{b_2}{b_2^{''}}=\frac{b_3}{b_3^{''}}.
\end{align*}
Let $ A_1, A_2 $, and $ A_3 $ be real numbers such that
\begin{align*}
A_i \ge \max\{4, ~4.296 \log\gamma_i+ 2\mathcal{D}h(\gamma_i)\},\quad  i=1,2,3, \quad \text{and} \quad  \Omega:=A_1A_2A_3\ge 100.
\end{align*}
Put
\begin{align*}
b^{'}: =\left(\dfrac{b_1^{'}}{A_2}+\dfrac{b_2^{'}}{A_1}\right)\left(\dfrac{b_3^{''}}{A_2}+\dfrac{b_2^{''}}{A_3}\right) \quad \text{and}\quad  \log \mathcal{B}:= \max\left\{0.882+\log b^{'}, \frac{10}{\mathcal{D}}\right\}.
\end{align*}
Then, either
\begin{align*}
\log|\Gamma|>-790.95\Omega\mathcal{D}^2(\log \mathcal{B})^2,
\end{align*}
or one of the following conditions holds:
\begin{itemize}
\item[(i)] there exist two positive integers $ r_0 $ and $ s_0 $ such that 
\begin{align*}
r_0b_2=s_0b_1
\end{align*}
with
\begin{align*}
r_0\le 5.61A_2 (\mathcal{D}\log\mathcal{D})^{\frac{1}{3}} \quad \text{and} \quad s_0\le 5.61A_1 (\mathcal{D}\log\mathcal{D})^{\frac{1}{3}};
\end{align*}
\item[(ii)] there exist integers $ r_1, s_1, t_1, $ and $ t_2 $, with $ r_1s_1\neq 0 $, such that
\begin{align*}
(t_1b_1+r_1b_3)s_1 = r_1b_2t_2, \quad \gcd (r_1, t_1)=\gcd(s_1, t_2) =1
\end{align*}
which also satisfy
\begin{align*}
|r_1s_1|\le 5.61\delta A_3 (\mathcal{D}\log\mathcal{D})^{\frac{1}{3}}, \quad |s_1t_1|\le 5.61\delta A_1 (\mathcal{D}\log\mathcal{D})^{\frac{1}{3}}, \quad |r_1t_2| \le 5.61\delta A_2 (\mathcal{D}\log\mathcal{D})^{\frac{1}{3}},
\end{align*}
where
\begin{align*}
\delta := \gcd(r_1, s_1).
\end{align*}
\end{itemize}
Moreover, when $ t_1=0 $ we can take $ r_1=1 $, and when $ t_2=0 $ we can take $ s_1 =1$.
\end{theorem}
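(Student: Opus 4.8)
The plan is to prove this lower bound by Laurent's method of \emph{interpolation determinants}, reinforced by a zero estimate; this is the framework in which Mignotte's explicit three--logarithm estimates are obtained. Throughout I argue by contradiction: I assume that
\[
\log|\Gamma|\le -790.95\,\Omega\,\mathcal{D}^2(\log\mathcal{B})^2
\]
and that neither alternative (i) nor (ii) holds, and I derive a contradiction. First I would carry out the standard normalizations, dividing out $\gcd(b_1,b_2,b_3)=1$ and recording $d_1,d_3$, and then fix nonnegative integer parameters $L_0,L_1,L_2,L_3$ (the degrees in the auxiliary variable and in each $\gamma_i$) together with a positive integer $K$; all of these are to be optimized at the very end, and it is the hypothesis that $|\Gamma|$ lies below the stated threshold that will drive the analytic estimate.

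Next I set up the determinant. Consider the entire functions
\[
\psi_\lambda(z):=z^{\lambda_0}\,\gamma_1^{\lambda_1 z}\gamma_2^{\lambda_2 z}\gamma_3^{\lambda_3 z},\qquad \lambda=(\lambda_0,\lambda_1,\lambda_2,\lambda_3),
\]
indexed by the $N$ tuples with $0\le \lambda_i\le L_i$, and evaluate them at $N$ integer points $\zeta_1,\dots,\zeta_N$ chosen in an arithmetic progression adapted to $b_2$, where $N$ is the number of chosen index tuples. Form
\[
D:=\det\bigl(\psi_{\lambda_j}(\zeta_i)\bigr)_{1\le i,j\le N}.
\]
The analytic estimate rests on the following observation: if $\Gamma=0$ then $\log\gamma_2=(b_1\log\gamma_1+b_3\log\gamma_3)/b_2$, so every $\psi_\lambda$ collapses to a function of $\gamma_1^z,\gamma_3^z$ alone, the $\psi_\lambda$ span a space of dimension strictly below $N$, and $D$ vanishes identically. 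For $\Gamma\ne 0$ but tiny, $D$ is correspondingly small: writing $\gamma_2^{b_2}=\gamma_1^{b_1}\gamma_3^{b_3}e^{\Gamma}$ lets me rewrite the $\gamma_2$--factors up to a multiplicative error of size $e^{|\Gamma|\,O(\cdot)}$, and a maximum--modulus (Schwarz--type) argument applied to $D$ on a disk of suitable radius yields an upper bound of the shape $\log|D|\le -c_1\,\Omega\,\mathcal{D}^2(\log\mathcal{B})^2+(\text{controlled terms})$.

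The arithmetic lower bound is a Liouville estimate via the product formula. Each entry $\psi_{\lambda_j}(\zeta_i)$ is a product of integer powers of $\gamma_1,\gamma_2,\gamma_3$, so $D\in\mathbb{Q}(\gamma_1,\gamma_2,\gamma_3)$ has degree dividing $\mathcal{D}$ and logarithmic height bounded in terms of $L_1,L_2,L_3$, the $\zeta_i$, and the heights $h(\gamma_i)$ — precisely the quantities packaged into $A_1,A_2,A_3$ through the constraint $A_i\ge\max\{4,\,4.296\log\gamma_i+2\mathcal{D}h(\gamma_i)\}$. Hence, provided $D\ne 0$, the fundamental inequality gives $\log|D|\ge -c_2(L_0,L_1,L_2,L_3,K,h(\gamma_i))$. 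Optimizing $L_i$ and $K$ so as to balance this against the analytic bound forces $\log|\Gamma|>-790.95\,\Omega\,\mathcal{D}^2(\log\mathcal{B})^2$, contradicting the assumption; extracting the explicit constant $790.95$ is exactly a matter of making this optimization sharp.

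The main obstacle is guaranteeing $D\ne 0$, and this is where the two exceptional alternatives are born. By a zero estimate for exponential polynomials (in the spirit of Philippon's zero estimates, or by a direct combinatorial analysis of the rank of the matrix $\bigl(\psi_{\lambda_j}(\zeta_i)\bigr)$ on an arithmetic progression), $D=0$ can occur only when the $\psi_\lambda$ fail to be linearly independent on the chosen points, and this forces a nontrivial multiplicative relation among $\gamma_1,\gamma_2,\gamma_3$ with exponents bounded in terms of the $L_i$. Translating such a relation back into one among $b_1,b_2,b_3$ and tracking the admissible exponent sizes produces exactly alternative (i) — a two--term dependence between $\gamma_1$ and $\gamma_2$, giving $r_0b_2=s_0b_1$ with $r_0\le 5.61A_2(\mathcal{D}\log\mathcal{D})^{1/3}$ and $s_0\le 5.61A_1(\mathcal{D}\log\mathcal{D})^{1/3}$ — and alternative (ii), the genuine three--term dependence governed by $\delta=\gcd(r_1,s_1)$. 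The delicate part is not the architecture of the argument but the numerical bookkeeping: obtaining the constants $790.95$ and $5.61$ simultaneously requires an optimal joint choice of all parameters and a sharp form of the zero estimate, and this is the technical heart of the proof.
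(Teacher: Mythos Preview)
The paper does not prove this theorem at all: it is quoted verbatim (in a slightly specialized form) from Mignotte's preprint \emph{A kit for linear forms in three logarithms}, and is used purely as a black box in the subsequent argument. So there is no ``paper's own proof'' to compare your proposal against.

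That said, your outline is an accurate high-level description of how results of this type are actually established in the source you would be citing. The architecture --- interpolation determinant built from monomials in $z$ and in the $\gamma_i^z$, analytic upper bound via a Schwarz-type argument exploiting the smallness of $\Gamma$, arithmetic lower bound via Liouville/product formula, and a zero estimate to handle the degenerate case $D=0$ which produces the exceptional alternatives (i) and (ii) --- is exactly the Laurent--Mignotte framework. Where your sketch is thin is precisely where you acknowledge it is thin: the extraction of the specific constants $790.95$ and $5.61$, and the precise shape of the exceptional relations, require dozens of pages of careful parameter optimization and a sharp combinatorial zero lemma, none of which is reproducible in a paragraph. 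As a proof \emph{sketch} for a cited tool this is fine; as a self-contained proof it would of course be far from complete, but the present paper makes no attempt at one either.
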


At some point we will need to treat linear forms in two logarithms of algebraic numbers. To set the stage, let $\gamma_1$ and  $\gamma_2$ be real algebraic numbers which are positive and let ${\mathcal D}:=[{\mathbb Q}(\gamma_1,\gamma_2):{\mathbb Q}]$.
Let $b_1$, $b_2$ be nonzero integers, let $B_1$ and $B_2$ be real numbers larger than $1$ such that
\begin{eqnarray*}
\log B_i\geq \max\left\{h(\gamma_i), \dfrac{|\log\gamma_i|}{\mathcal D}, \dfrac{1}{\mathcal D}\right\},\qquad {\text{\rm for}}\quad  i=1,2,
\end{eqnarray*}
and put
\begin{eqnarray*}
b^{\prime}=\dfrac{|b_{1}|}{{\mathcal D}\log B_2}+\dfrac{|b_2|}{{\mathcal D}\log B_1}.
\end{eqnarray*}
Let
\begin{equation}
\label{eq:Gamma}
\Gamma:= b_1\log\gamma_1+b_2\log\gamma_2.
\end{equation}
The following result of Laurent et al. is Corollary 2 in \cite{Laurent:1995}.

\begin{theorem}\label{Mignotte2}
With the above notations assuming furthermore that $\gamma_{1} $ and $\gamma_{2}$ are multiplicatively independent we have
\begin{eqnarray}
\log |\Gamma|> -24.34{\mathcal D}^4\left(\max\left\{\log b^{\prime}+0.14, \dfrac{21}{\mathcal D}, \dfrac{1}{2}\right\}\right)^{2}\log B_1\log B_2.
\end{eqnarray}
\end{theorem}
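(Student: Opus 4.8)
The plan is to prove this lower bound by Laurent's \emph{interpolation determinant} method, the technique on which \cite{Laurent:1995} rests; it replaces the usual construction of an auxiliary function with integer coefficients by the direct analytic and arithmetic estimation of a single determinant. After replacing each $\gamma_i$ by $\gamma_i^{-1}$ and changing the sign of the corresponding $b_i$ whenever necessary (operations that leave $|\Gamma|$, the heights, and the hypotheses unchanged), I may assume $\gamma_1,\gamma_2>1$; note that $\gamma_i=1$ is excluded by multiplicative independence. Assume toward a contradiction that $\log|\Gamma|$ lies \emph{below} the claimed bound, and introduce integer parameters $L_1,L_2,T\ge 1$, to be optimized only at the very end, together with $N:=(L_1+1)(L_2+1)T$.

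First I would form the $N\times N$ interpolation matrix. Consider the $N$ entire functions
\[
\varphi_{\lambda_1,\lambda_2,\tau}(z)=z^{\tau}\,\gamma_1^{\lambda_1 z}\,\gamma_2^{\lambda_2 z},\qquad 0\le\lambda_i\le L_i,\ 0\le\tau<T,
\]
and evaluate them at the integer points $z=0,1,\dots,N-1$, obtaining the matrix $\mathcal M=\bigl(\varphi_{\lambda_1,\lambda_2,\tau}(s)\bigr)$ and $\Delta:=\det\mathcal M$. The role of $\Gamma$ is built in through the identity $\gamma_2=\gamma_1^{-b_1/b_2}\,e^{\Gamma/b_2}$, which lets me write $\gamma_2^{\lambda_2 z}=\gamma_1^{-(b_1/b_2)\lambda_2 z}e^{(\Gamma/b_2)\lambda_2 z}$; thus, up to the factors $e^{(\Gamma/b_2)\lambda_2 z}=1+O(\Gamma)$ on a bounded disk, the functions collapse onto the smaller family $z^{\tau}\gamma_1^{(\lambda_1-(b_1/b_2)\lambda_2)z}$.

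The heart of the argument is then the confrontation of three estimates for $\Delta$. For the \emph{analytic upper bound}, the collapse just described makes the columns of $\mathcal M$ nearly linearly dependent, the defect being measured by $|\Gamma|$; combining this with the Schwarz-lemma estimate for interpolation determinants — which, on passing from the disk of radius $r$ containing the points to a larger disk of radius $R$, contributes a gain $(r/R)^{\binom{N}{2}}$ — yields
\[
\log|\Delta|\le -\tbinom{N}{2}\log(R/r)+c_1 N\bigl(L_1\log\gamma_1+L_2\log\gamma_2\bigr)R+\ (\text{a term proportional to }\log|\Gamma|,\ \text{negative}).
\]
For the \emph{arithmetic lower bound}, after clearing denominators by a common power of $\gamma_1,\gamma_2$ the quantity $\Delta$ is an algebraic number of degree at most $\mathcal D$ whose logarithmic height is $O\bigl(N(L_1 h(\gamma_1)+L_2 h(\gamma_2)+T\log N)\bigr)$, so Liouville's inequality gives, provided $\Delta\ne0$,
\[
\log|\Delta|\ge -c_2\,\mathcal D\,N\bigl(L_1\log B_1+L_2\log B_2+T\log N\bigr).
\]
For the \emph{nonvanishing}, that $\Delta\ne 0$ is where the multiplicative independence of $\gamma_1,\gamma_2$ is indispensable: a vanishing determinant for the chosen grid would force a nontrivial $\mathbb C$-linear dependence among the $\varphi_{\lambda_1,\lambda_2,\tau}$ on the points $0,\dots,N-1$, and a generalized-Vandermonde argument turns this into a relation $\gamma_1^a\gamma_2^c=1$ with $(a,c)\ne(0,0)$, contradicting the hypothesis.

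Finally I would chain the two inequalities,
\[
-c_2\,\mathcal D\,N(L_1\log B_1+L_2\log B_2+T\log N)\le\log|\Delta|\le -\tbinom{N}{2}\log(R/r)+\cdots,
\]
so that the large negative Schwarz gain must be compensated by the term carrying $\log|\Gamma|$, producing a lower bound for $|\Gamma|$. Choosing the grid lengths so that $L_1\log B_1\approx L_2\log B_2$ makes the product $\log B_1\log B_2$ emerge; the factor $(\,\log b'+0.14\,)^2$ reflects the number of points $N$ measured against the reduced dimension of the collapsed family; and the several powers of $\mathcal D$ accumulate from the degree in Liouville's inequality together with the normalized sizes, in which $\log\gamma_i\le\mathcal D\log B_i$. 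The explicit constant $24.34$ and the offsets are then extracted by optimizing $L_1,L_2,T$ and $R$. I expect two steps to be the genuine obstacles: the nonvanishing (zero) estimate, which must be made effective with explicit admissible grids, and the delicate end-optimization required to reach the sharp numerical constant rather than merely a bound of the correct shape.
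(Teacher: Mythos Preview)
The paper does not prove this statement at all: Theorem~\ref{Mignotte2} is quoted verbatim as Corollary~2 of Laurent--Mignotte--Nesterenko \cite{Laurent:1995} and used as a black box throughout. There is nothing in the paper to compare your argument against.

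That said, your sketch is a fair high-level outline of the interpolation-determinant method that \cite{Laurent:1995} actually uses, so in spirit you are reproducing the cited source rather than offering an alternative. Two cautions if you intend to flesh it out. First, the nonvanishing step is more delicate than ``a generalized-Vandermonde argument turns this into a relation $\gamma_1^a\gamma_2^c=1$'': the determinant can vanish for reasons unrelated to multiplicative dependence (accidental coincidences among the exponents $\lambda_1-(b_1/b_2)\lambda_2$), and the actual zero lemma in \cite{Laurent:1995} requires a careful choice of the interpolation points and a combinatorial argument on the rank, not merely the hypothesis of independence. Second, the passage from a bound ``of the correct shape'' to the explicit constant $24.34$ and the specific cutoff $\max\{\log b'+0.14,\,21/\mathcal D,\,1/2\}$ is the bulk of the work in the original paper; your final paragraph acknowledges this but understates it --- the optimization is not a routine calculus exercise but a multi-page case analysis. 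As written, your proposal is a plausible roadmap but not a proof; for the purposes of the present paper, simply citing \cite{Laurent:1995} as the authors do is the appropriate course.
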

Note that the fact that $\Gamma\ne 0$ is already guaranteed by the condition that $\gamma_1$ and $\gamma_2$ are multiplicatively independent together with the fact that $b_1,~b_2$ are nonzero integers.

During the calculations we get upper bounds on our variables which are too large, thus we need to reduce them. To do so we use some results from the theory of continued fractions. 

For the treatment of linear forms homogeneous in two integer variables we use a well-known classical result in the theory of Diophantine approximation due  to Legendre.
\begin{lemma}\label{Legendre}
Let $\tau$ be an irrational number,  $ {p_0}/{q_0}, {p_1}/{q_1}, {p_2}/{q_2}, \ldots $ be  the sequence of convergents of the continued fraction expansion of $ \tau $ and $ M $ be a positive integer. Let $ N $ be a nonnegative integer such that $ q_N> M $. Putting $ a(M):=\max\{a_{i}: i=0, 1, 2, \ldots, N\} $ the inequality
\begin{eqnarray*}
\left|\tau - \dfrac{u}{v}\right|> \dfrac{1}{(a(M)+2)v^{2}},
\end{eqnarray*}
holds for all pairs $ (u,v) $ of positive integers with $ 0<v<M $. Furthermore, if 
$$
\left|\tau-\frac{u}{v}\right|<\frac{1}{2v^2},
$$
then $u/v=p_k/q_k$ for some $k\ge 0$. 
\end{lemma}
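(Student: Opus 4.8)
The plan is to establish the two assertions of the lemma, but in the reverse order, since the ``furthermore'' clause is Legendre's classical criterion and it is precisely what powers the displayed inequality. Throughout I would write $\tau=[a_0;a_1,a_2,\dots]$ for the (infinite, as $\tau$ is irrational) simple continued fraction expansion, $p_k/q_k$ for its convergents, and $\tau_{k+1}=[a_{k+1};a_{k+2},\dots]>1$ for the complete quotients, and I would use only the three standard recurrence/determinant facts $q_k=a_kq_{k-1}+q_{k-2}$, $p_kq_{k-1}-p_{k-1}q_k=(-1)^{k-1}$, and $\tau=(\tau_{k+1}p_k+p_{k-1})/(\tau_{k+1}q_k+q_{k-1})$. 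Clearing denominators in the last identity and substituting the determinant relation produces the exact formula
\[
\tau-\frac{p_k}{q_k}=\frac{(-1)^{k}}{q_k(\tau_{k+1}q_k+q_{k-1})},
\]
which is the quantitative core of both parts.

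For Legendre's criterion I would assume $|\tau-u/v|<1/(2v^2)$ and first divide $u,v$ by their $\gcd$; this only shrinks the denominator and so preserves the hypothesis, allowing me to take $\gcd(u,v)=1$. Since every rational admits exactly two finite continued fraction expansions whose lengths have opposite parities, I would write $u/v=[a_0;a_1,\dots,a_n]=p_n/q_n$ with $q_n=v$ and with the parity of $n$ chosen so that $(-1)^n=\operatorname{sgn}(\tau-u/v)$. Writing then $\tau-p_n/q_n=(-1)^n\delta/q_n^{2}$ with $0<\delta<1/2$, I would define $\omega$ by $\tau=(\omega p_n+p_{n-1})/(\omega q_n+q_{n-1})$ and solve; a short computation using the determinant identity collapses to $\omega=1/\delta-q_{n-1}/q_n$. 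Because $1/\delta>2$ while $0\le q_{n-1}/q_n\le1$, this forces $\omega>1$; the continued fraction expansion of $\tau$ then begins with $a_0,a_1,\dots,a_n$ followed by that of $\omega$, exhibiting $u/v=p_n/q_n$ as a genuine convergent of $\tau$.

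For the displayed inequality I would fix $(u,v)$ with $0<v<M$ and again reduce to $\gcd(u,v)=1$, which replaces $v$ by a divisor and so only strengthens the target bound while keeping the denominator below $M$. If $u/v$ is not a convergent, the contrapositive of Legendre's criterion yields $|\tau-u/v|\ge 1/(2v^2)>1/((a(M)+2)v^2)$, the last step using $a(M)\ge a_1\ge1$ and hence $a(M)+2>2$. If instead $u/v=p_k/q_k$, then $v=q_k$, and from $q_k=v<M<q_N$ together with the $q_j$ being nondecreasing I would conclude $k<N$, hence $k+1\le N$ and $a_{k+1}\le a(M)$. Inserting $\tau_{k+1}<a_{k+1}+1$ and $q_{k-1}\le q_k$ into the exact formula bounds its denominator by $q_k(\tau_{k+1}q_k+q_{k-1})<(a_{k+1}+2)q_k^{2}$, so that
\[
\left|\tau-\frac{p_k}{q_k}\right|=\frac{1}{q_k(\tau_{k+1}q_k+q_{k-1})}>\frac{1}{(a_{k+1}+2)v^2}\ge\frac{1}{(a(M)+2)v^2},
\]
which is exactly the claim.

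The step I expect to be delicate is Legendre's criterion, and specifically the parity choice inside it: one must select the expansion of $u/v$ whose length matches the sign of $\tau-u/v$, so that the factor $(-1)^n$ cancels the sign of $\delta$ and leaves $\omega=1/\delta-q_{n-1}/q_n>1$; the opposite parity produces $\omega<0$ and the argument collapses. A handful of degenerate configurations — when $u/v$ is an integer, or when $n\in\{0,1\}$ so that $q_{-1}=0$ or $q_0=q_1$ — would have to be inspected directly, but in each the only facts used, namely $1/\delta>2$ and $0\le q_{n-1}/q_n\le1$, remain valid, so they cause no real trouble. Everything else is routine manipulation of the continued fraction recurrences.
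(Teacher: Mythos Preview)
The paper does not actually prove this lemma; it is stated in the preliminaries as a well-known classical result from Diophantine approximation (the ``furthermore'' clause being Legendre's theorem), and no argument is supplied. Your proposal provides a correct and standard proof of both parts: you first establish Legendre's criterion via the parity trick and the complete-quotient identity $\tau=(\omega p_n+p_{n-1})/(\omega q_n+q_{n-1})$, then deduce the displayed lower bound by a case split --- if $u/v$ is not a convergent, the contrapositive of Legendre already gives $|\tau-u/v|\ge 1/(2v^2)>1/((a(M)+2)v^2)$, while if $u/v=p_k/q_k$ with $q_k<M<q_N$, the exact formula $|\tau-p_k/q_k|=1/(q_k(\tau_{k+1}q_k+q_{k-1}))$ together with $\tau_{k+1}<a_{k+1}+1\le a(M)+1$ and $q_{k-1}\le q_k$ yields the claim. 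The reductions to $\gcd(u,v)=1$ and the edge cases you flag are handled correctly. Since the paper offers no proof of its own, there is nothing further to compare.
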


For a nonhomogeneous linear form in two integer variables we use a slight variation of a result due to Dujella and Peth{\H o} (see \cite{dujella98}, Lemma 5a). For a real number $X$, we write  $\Vert X \Vert := \min\{|X-n|: n\in\mathbb{Z}\}$ for the distance from $X$ to the nearest integer.
\begin{lemma}\label{Dujjella}
Let $M$ be a positive integer, ${p}/{q}$ be a convergent of the continued fraction expansion of the irrational number $\tau$ such that $q>6M$, and  $A,B,\mu$ be some real numbers with $A>0$ and $B>1$. Furthermore, let $\varepsilon: = \Vert \mu q \Vert -M\Vert\tau q\Vert $. If $ \varepsilon > 0 $, then there is no solution to the inequality
$$
0<|u\tau-v+\mu|<AB^{-w}
$$
in positive integers $u,v$, and $w$ with
$$ 
u\le M \quad {\text{and}}\quad w\ge \dfrac{\log(Aq/\varepsilon)}{\log B}.
$$
\end{lemma}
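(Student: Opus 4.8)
The plan is to reduce the statement to the defining approximation property of a convergent by clearing the denominator $q$ and separating the integer part from the two small fractional contributions. I would argue by contradiction: suppose positive integers $u,v,w$ satisfy $0<|u\tau-v+\mu|<AB^{-w}$ together with $u\le M$ and $w\ge \log(Aq/\varepsilon)/\log B$. Multiplying the central inequality by $q>0$ gives
\begin{equation*}
0<|q(u\tau-v+\mu)|<qAB^{-w},
\end{equation*}
so the core of the proof is to produce a lower bound for the left-hand side that is independent of $w$ and equals exactly $\varepsilon$.

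To get that lower bound I would rewrite
\begin{equation*}
q(u\tau-v+\mu)=(up-vq)+u(q\tau-p)+q\mu,
\end{equation*}
isolating the integer $N:=up-vq$. The term $u(q\tau-p)$ is small: since $p/q$ is a convergent of $\tau$ one has $|q\tau-p|<1/q$, and because $q>6M\ge 6$ this is below $1/2$, so $p$ is the nearest integer to $q\tau$ and $|q\tau-p|=\Vert\tau q\Vert$; hence $|u(q\tau-p)|=u\Vert\tau q\Vert\le M\Vert\tau q\Vert$. For the remaining part, $|N+q\mu|\ge \Vert\mu q\Vert$, because $-N$ is an integer and the distance from $q\mu$ to any integer is at least $\Vert\mu q\Vert$. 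Combining these by the triangle inequality yields
\begin{equation*}
|q(u\tau-v+\mu)|\ge |N+q\mu|-|u(q\tau-p)|\ge \Vert\mu q\Vert-M\Vert\tau q\Vert=\varepsilon.
\end{equation*}

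Putting the two estimates together gives $\varepsilon<qAB^{-w}$, whence $B^{w}<qA/\varepsilon$ and, taking logarithms (legitimate since $B>1$ and $\varepsilon>0$), $w<\log(Aq/\varepsilon)/\log B$, which contradicts the assumed lower bound on $w$. The only delicate point, and the place where the hypotheses genuinely enter, is the identification $|q\tau-p|=\Vert\tau q\Vert$: without it one would only control $u(q\tau-p)$ by the possibly larger quantity $M|q\tau-p|$, and the clean bound $\varepsilon$ would be lost. This identification is exactly what the condition $q>6M$ secures, via $|q\tau-p|<1/q<1/2$; everything else is a routine use of the triangle inequality and the monotonicity of the logarithm.
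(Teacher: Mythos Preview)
The paper does not give its own proof of this lemma; it is quoted as a slight variation of Lemma~5a in Dujella--Peth\H{o}~\cite{dujella98}. Your argument is correct and is the standard proof of this reduction lemma: multiply through by $q$, split off the integer $up-vq$, bound $u(q\tau-p)$ by $M\Vert\tau q\Vert$ via the convergent property, bound $|N+q\mu|$ below by $\Vert\mu q\Vert$, and compare with $qAB^{-w}$ to force $w<\log(Aq/\varepsilon)/\log B$.

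One small remark on your closing commentary: the identification $|q\tau-p|=\Vert\tau q\Vert$ only requires $|q\tau-p|<1/2$, which already follows from $q\ge 2$ for any convergent. The hypothesis $q>6M$ is therefore far more than what is needed at that step, and in fact is not used anywhere else in your argument; so calling it ``exactly what secures'' the identification overstates its role. This does not affect the correctness of the proof.
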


\section{Proof of Theorem \ref{ProbM}}

\subsection{The cases $n=1$ or $x=1$}
We assume that $n\ge 1$, as the solution with $ n=0 $ is trivial. Since $ U_{n+1}< U_{n+1}+U_n< U_{n+2} $, it follows that the Diophantine equation Eq. \eqref{Prob2} has no solution with $x=1$. Let us assume that $n=1$.  We then get that 
\begin{eqnarray}\label{chu1}
U_m=1+r^{x}.
\end{eqnarray}  
In particular, $U_m\equiv 1\pmod r$. Lemma \ref{lem:2} shows that $m\equiv 1\pmod 2$ and now Lemma \ref{lem:1} shows that
$$
r^x=U_m-1=U_{(m-\delta)/2}V_{(m+ \delta)/2} \quad \text{where} \quad \delta \in \{\pm 1\},\quad \delta \equiv m \pmod 4.
$$
We now recall the Primitive Divisor Theorem of Carmichael \cite{Car} (see \cite{BHV} for the most general statement) for the sequence ${\bf U}$. It states that if 
$\ell>12$, there is a prime factor $p$ of $U_{\ell}$ which is primitive in the sense that $p\nmid U_{k}$ for any positive $k<\ell$. So, assume $m+\delta\ge 14$. Since $m+\delta$ is even we have $U_{m+\delta}=U_{(m+\delta)/2}V_{(m+\delta)/2}$ by the formula Eq. \eqref{double}.  Since $U_{m+\delta}$ has a primitive prime factor $p$, the primitive prime $p$ must be a divisor of $V_{(m+\delta)/2}$, which in turn must divide $r=U_2$, a contradiction. Thus, $m+\delta\le 12$, therefore $m\le 13$. It thus follows that
$$
r^x=U_m-1<\alpha^{m-1}\le \alpha^{12}<(r^2+4)^{6},
$$
so, if $x\ge 13$, then 
$$
3^{x-12}\le r^{x-12}<(1+4/r^2)^6\le (1+4/3^2)^6<9.1,
$$
which gives $x\le 14$. Thus, $m\le 13,~x\le 14$. For each choice of the pair $(m,x)$ with the components in the above ranges, the equation $U_m(r)-1-r^x=0$ is a polynomial equation in $r$. 
After a simple computer search, we found no other solutions to equation Eq. \eqref{chu1} apart from the solution $ (n,m,x) = (1,3,2) $ which has $x=2$ so it is part of the parametric family of solutions indicated at Eq. \eqref{Prob1}.   

So, from now on we assume that $n\ge 2$ and $x\ge 3$.

\subsection{Calculations when $n\in [2,100]$ and $x\in [3,100]$}

Using the equation Eq. \eqref{Prob2} and the inequality Eq. \eqref{Prob4}, we get
\begin{eqnarray*}
\alpha^{(n-1)x}< U_{n+1}^x< U_n^x+U_{n+1}^x = U_m \le  \alpha^{m-1},
\end{eqnarray*}
and 
\begin{eqnarray*}
\alpha^{m-2}< U_m=  U_n^x+U_{n+1}^x  < (U_n+U_{n+1})^{x}< U_{n+2}^{x}<  \alpha^{(n+1)x}.
\end{eqnarray*}
From the above inequalities, we get the following result which we record for future reference.

\begin{lemma}
\label{mversusnx}
The inequalities
\begin{eqnarray}\label{Prob6}
(n-1)x+1< m< (n+1)x+2
\end{eqnarray}
hold for all $x\ge 3$ and $n\ge 2$.
\end{lemma}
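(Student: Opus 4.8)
The plan is to sandwich $U_m$ between powers of $\alpha$ from both sides, using only the defining equation Eq. \eqref{Prob2} together with the two-sided bound Eq. \eqref{Prob4}, and then to compare exponents by taking logarithms in base $\alpha$. Since this is a purely elementary size estimate with no arithmetic content, I do not expect a genuine obstacle; the only points that need a moment's care are the strictness of the inequalities and the elementary fact that the map $t\mapsto t^x$ is superadditive when $x>1$.

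For the lower bound on $m$ I would start from Eq. \eqref{Prob2} and discard the positive summand $U_n^x$, so that $U_m=U_n^x+U_{n+1}^x>U_{n+1}^x$. Applying the left half of Eq. \eqref{Prob4} to $U_{n+1}$ gives $U_{n+1}^x\ge \alpha^{(n-1)x}$, while the right half of Eq. \eqref{Prob4} applied to $U_m$ gives $U_m\le \alpha^{m-1}$. Chaining these shows $\alpha^{(n-1)x}<\alpha^{m-1}$; since $\alpha>1$, taking $\log_\alpha$ is order-preserving and yields $(n-1)x<m-1$, that is $(n-1)x+1<m$.

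For the upper bound I would invoke the elementary inequality $U_n^x+U_{n+1}^x<(U_n+U_{n+1})^x$, which for the positive integer $x\ge 3$ follows at once from the binomial theorem, the difference being the sum of the (strictly positive) middle terms $\binom{x}{k}U_n^kU_{n+1}^{x-k}$ with $0<k<x$. Because $r\ge 3$, the recurrence $U_{n+2}=rU_{n+1}+U_n$ gives $U_{n+2}=rU_{n+1}+U_n>U_{n+1}+U_n$, hence $U_m<U_{n+2}^x$, and the right half of Eq. \eqref{Prob4} applied to $U_{n+2}$ bounds this by $\alpha^{(n+1)x}$. Combining with the left half of Eq. \eqref{Prob4} for $U_m$, namely $\alpha^{m-2}\le U_m$, gives $\alpha^{m-2}<\alpha^{(n+1)x}$, whence $m-2<(n+1)x$, i.e.\ $m<(n+1)x+2$. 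Putting the two bounds together establishes Eq. \eqref{Prob6} for all $x\ge 3$ and $n\ge 2$.
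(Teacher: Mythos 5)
Your proof is correct and follows essentially the same route as the paper: both arguments sandwich $U_m$ via $U_{n+1}^x < U_m = U_n^x+U_{n+1}^x < (U_n+U_{n+1})^x < U_{n+2}^x$ together with the two-sided bound $\alpha^{k-2}\le U_k\le \alpha^{k-1}$, and then compare exponents of $\alpha$. Your extra care about strictness and the binomial-theorem justification of superadditivity are fine and only make explicit what the paper leaves implicit.
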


We next consider Eq. \eqref{Fib2} given in Lemma \ref{Fib1}. We write Eq. \eqref{Prob2} as
\begin{align}\label{papp1}
\left(\sum_{\substack{0\le k\le n  \\ k\not \equiv n (\text{mod}~ 2)}}\binom{\frac{n+k-1}{2}}{k}r^k\right)^{x} + \left(\sum_{\substack{0\le k\le n+1  \\ k\not \equiv n+1 (\text{mod}~ 2)}}\binom{\frac{n+k}{2}}{k}r^k\right)^{x} = \sum_{\substack{0\le k\le m  \\ k\not \equiv m (\text{mod}~ 2)}}\binom{\frac{m+k-1}{2}}{k}r^k.
\end{align}
Assume first that $ n $ is even. Then Eq. \eqref{papp1} becomes
\begin{align*}
\left(\frac{n}{2}r+\binom{\frac{n+2}{2}}{3}r^3+ \cdots\right)^{x}+\left(1+\binom{\frac{n+2}{2}}{2}r^2+\binom{\frac{n+4}{2}}{4}r^4+ \cdots\right)^{x} = 1+\binom{\frac{m+1}{2}}{2}r^2+\binom{\frac{m+3}{2}}{3}r^4+\cdots,
\end{align*}
which is equivalent to
\begin{align}
\binom{\frac{n+2}{2}}{2}r^2x+\binom{\frac{n+4}{2}}{4}r^4x+\cdots \equiv \binom{\frac{m+1}{2}}{2}r^2+\binom{\frac{m+3}{2}}{3}r^4+\cdots (\text{mod}~ r^x).
\end{align}
The above relation implies that
\begin{align}\label{pip1}
r^{\min\{x,4\}-2}\Big| \binom{\frac{m+1}{2}}{2}-x\binom{\frac{n+2}{2}}{2}.
\end{align}
Similarly, when $ n $ is odd, one is led to the analogous divisibility relation
\begin{align}\label{pip2}
r^{\min\{x,4\}-2}\Big|\binom{\frac{m+1}{2}}{2}-x\binom{\frac{n+1}{2}}{2}.
\end{align}
So, fixing $n\in [2,100]$ and $x\in [3,100]$, inequalities Eq. \eqref{Prob6} give some range for $m$. For each $(n,x,m)$, divisibility relations 
Eq. \eqref{pip1} and Eq. \eqref{pip2} (according to whether $n$ is even or odd) give us some possibilities for $r\ge 3$ and now one checks whether relation Eq. \eqref{Prob2}
holds for this candidate $(n,x,m,r)$.  A computer search with Mathematica in this range for $n$ and $x$ which ran for a few hours  found no solutions. For the search we didn't actually checked 
that formula Eq. \eqref{papp1} holds but we checked that Eq. \eqref{papp1} does not hold modulo $T$, where $T$ is the product of the first $20$ primes. The Mathematica function {\tt powermod} 
allowed us to compute the powers of $r$ modulo $T$ arising from the binomial formula rather quickly. 

\medskip

From now on, we assume that $n\ge 2,~x\ge 3$ and $\max\{n,x\}> 100$.

\subsection{A small linear form in three logs}
We rewrite equation Eq. \eqref{Prob2} as
\begin{eqnarray}
\dfrac{\alpha^{m}}{\alpha-\beta}-U_{n+1}^x = U_n^x+ \dfrac{\beta^{m}}{\alpha-\beta}.
\end{eqnarray}
Dividing both sides of the above equation by $ U_{n+1}^x $ and  using the inequality Eq. \eqref{Prob5}, we obtain
\begin{eqnarray}\label{Prob7}
\left|\alpha^{m}(\alpha-\beta)^{-1}U_{n+1}^{-x}-1\right|=\left(\dfrac{U_n}{U_{n+1}}\right)^{x}+\dfrac{\beta^{m}}{(\alpha-\beta)U_{n+1}^{x}}<2\left(\dfrac{U_n}{U_{n+1}}\right)^{x}<\dfrac{2}{r^{x}}.
\end{eqnarray}

Put
\begin{eqnarray}\label{Pir1}
\Lambda:=\alpha^{m}(\alpha-\beta)^{-1}U_{n+1}^{-x}-1 \quad \text{and} \quad \Gamma:=m\log\alpha -\log(\alpha-\beta) - x\log U_{n+1}.
\end{eqnarray}
We observe that $ \Lambda=e^{\Gamma}-1 $, where $ \Lambda $ and $ \Gamma $ are given by \eqref{Pir1}. Since $ |\Lambda|\le 2/27 $, we have that $ e^{|\Gamma|}\le 27/25 $ and using the inequality \eqref{Prob7} we obtain
\begin{align}\label{Ppx}
|\Gamma|=\left|m\log\alpha -\log (\sqrt{r^2+4})-x\log U_{n+1}\right|\le e^{|\Gamma|}|e^{|\Gamma|}-1|\le  \frac{27|\Lambda|}{25}< \dfrac{2.2}{r^{x}}.
\end{align}
We record the above inequality for future reference.

\begin{lemma}
With $\Gamma$ given by formula Eq. \eqref{Pir1}, inequality Eq. \eqref{Ppx} holds.
\end{lemma}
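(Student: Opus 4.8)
The plan is to obtain the inequality directly from the defining equation, with no recourse to anything deep. Starting from Eq.~\eqref{Prob2} and the Binet formula Eq.~\eqref{Prob3} in the form $U_m=\alpha^m/(\alpha-\beta)-\beta^m/(\alpha-\beta)$, I would isolate the dominant term by rearranging Eq.~\eqref{Prob2} into the shape $\alpha^m(\alpha-\beta)^{-1}-U_{n+1}^x=U_n^x+\beta^m(\alpha-\beta)^{-1}$, and then divide through by $U_{n+1}^x$. This expresses the quantity $\Lambda$ of Eq.~\eqref{Pir1} as a sum of two terms that I will argue are both small.

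The second step is to bound $|\Lambda|$. The point is that both terms on the right are negligible next to the main term $U_{n+1}^x$: by Eq.~\eqref{Prob5} one has $(U_n/U_{n+1})^x<r^{-x}$, while $|\beta|<1$ and $\alpha-\beta>1$ force the remaining term $\beta^m/((\alpha-\beta)U_{n+1}^x)$ to be dominated by $(U_n/U_{n+1})^x$; summing the two contributions gives the bound $|\Lambda|<2\,(U_n/U_{n+1})^x<2/r^x$ recorded as Eq.~\eqref{Prob7}. Since $r\ge 3$ and $x\ge 3$, this already yields $|\Lambda|<2/27$.

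The final step is to pass from the multiplicative quantity $\Lambda$ to the additive linear form $\Gamma$. Since $\alpha-\beta=\sqrt{r^2+4}>0$, the three numbers $\alpha$, $\sqrt{r^2+4}$, $U_{n+1}$ are positive reals, so $\Gamma$ in Eq.~\eqref{Pir1} is real and $\Lambda=e^{\Gamma}-1$. From $|\Lambda|<2/27$ I get $e^{\Gamma}\in[25/27,29/27]$, hence $e^{|\Gamma|}\le 27/25$. The elementary inequality $|e^{\Gamma}-1|\ge|\Gamma|e^{-|\Gamma|}$, obtained by applying the mean value theorem to $t\mapsto e^t$ on the segment between $0$ and $\Gamma$, rearranges to $|\Gamma|\le e^{|\Gamma|}|\Lambda|$ and converts the bound on $|\Lambda|$ into $|\Gamma|\le(27/25)|\Lambda|<(54/25)r^{-x}<2.2\,r^{-x}$, which is precisely Eq.~\eqref{Ppx}.

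I do not expect a serious obstacle here: the lemma is a routine linearization of Eq.~\eqref{Prob2}, and the only mildly delicate point is arranging the constants so that the passage $\Lambda\mapsto\Gamma$ costs almost nothing, which is why one first secures $|\Lambda|\le 2/27$ before invoking $e^{|\Gamma|}\le 27/25$. The genuine difficulty of the paper lies downstream, in feeding this upper bound for $|\Gamma|$ into Mignotte's three-logarithm estimate (Theorem~\ref{Mignotte1}) to bound $\max\{n,x\}$, and in the subsequent continued-fraction reduction; the present lemma merely manufactures the small linear form on which all that machinery will act.
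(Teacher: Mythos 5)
Your proof is correct and matches the paper's own argument essentially step for step: the same rearrangement of Eq.~\eqref{Prob2} into $\Lambda$, the same bound $|\Lambda|<2(U_n/U_{n+1})^x<2/r^x\le 2/27$ via Eq.~\eqref{Prob5}, and the same passage $|\Gamma|\le e^{|\Gamma|}|\Lambda|\le (27/25)|\Lambda|<2.2/r^x$. Your explicit mean value theorem justification of $|e^{\Gamma}-1|\ge |\Gamma|e^{-|\Gamma|}$ is slightly more careful than the paper's one-line version, but the approach is identical.
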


We want to apply Theorem \ref{Mignotte1} with the following data:
\begin{eqnarray*}
\quad \gamma_1:=\alpha-\beta = \sqrt{r^2+4}, \quad \gamma_2:=\alpha , \quad \gamma_3:=U_{n+1}, \quad b_1:=1, \quad b_2:=m, \quad b_3:=x.
\end{eqnarray*}
We need to check that $ \gamma_1 $, $ \gamma_2 $, and $ \gamma_3 $ are multiplicatively independent. This we do in the next subsection.

\subsection{Checking that $\gamma_1,~\gamma_2,~\gamma_3$ are multiplicatively independent}\label{submult}
Well, assume they are not and let $i,j,k$ be integers not all zero such that 
$$
\gamma_1^{i}\gamma_2^j\gamma_3^k=1.
$$
Squaring and rearranging the above relation we get $\gamma_2^{2j}=(\gamma_1^2)^{-i} \gamma_3^{-2k}\in {\mathbb Q}$. However, $\gamma_2^{2j}$ is also a unit, so an algebraic integer whose reciprocal is also an algebraic integer, and it is also positive, so it must be $1$. Thus, $j=0$. It now follows that 
$i$ and $k$ are both nonzero (since if one of them is, so is the other one) and further $\gamma_3=\gamma_1^{-i/k}$. In particular, all prime factors of $U_{n+1}$ are prime factors of $\Delta:=r^2+4$. But this is also contemplated by the Primitive Divisor Theorem of Carmichael since primes dividing $\Delta$ are not considered
primitive. In particular, $U_{n+1}$ does not have primitive prime factors so $n+1\le 12$. In fact, Theorem C in \cite{BHV} together with Table 1 there show that either $n+1\in \{2,3,4,6\}$ or $n+1\in \{5,12\}$ 
but in this last case, the only such Lucas sequence ${\bf U}$ for which either one of $U_5$ or $U_{12}$ does not have primitive prime factors is the sequence of Fibonacci numbers, which is not our case. Thus, $n+1\in \{2,3,6\}$. Further, for each prime $p$ let $z(p)$ be the index of appearance of $p$ in ${\bf U}$ defined as the smallest positive integer $k$ such that $p\mid U_k$. This always exists for our sequence ${\bf U}$ since $\alpha$ 
is a quadratic unit. It has the additional  property that if $\ell$ is a positive integer then $p\mid U_{\ell}$ if and only if $z(p)\mid \ell$. It is also well--known and easy to prove that if $p\mid \Delta$, then $z(p)=p$. Since also 
$z(p)\mid n+1$ and $n+1\in \{2,3,6\}$, it follows that the only possibilities for $p$ are $p=2,3$. Hence, $r^2+4=2^a 3^b$. However, $3$ cannot divide $r^2+4$ for any positive integer $r$ (because $-4$ is not a quasartic residue modulo $3$), so $b=0$ and $2^a=r^2+4$. Thus, $r=2r_0$ is even, $a\ge 3$ and the equation simplifies to $2^{a-2}=r_0^2+1$. Hence, $r_0$ is odd, so $r_0^2\equiv 1\pmod 8$, therefore $2\| r_0^2+1$, which leads to $a=3,~r_0=1$, which gives $r=2$, is not our case. Hence, indeed $\gamma_1,~\gamma_2,~\gamma_3$ are multiplicatively independent. 

\subsection{Applying Theorem \ref{Mignotte1}}

Since $ \gamma_1, \gamma_2, \gamma_3 \in \mathbb{Q}(\alpha) $, we have $ \mathcal{D}=2 $. We also have 
$$
r<\alpha<{\sqrt{r^2+4}}<r+1.
$$
So, we bound the heights of $\alpha$ and ${\sqrt{r^2+4}}$ in terms or $\log(r+1)$. Since
$$
h(\gamma_1)=h(\alpha-\beta) = \dfrac{1}{2}\log (r^2+4)<\log(r+1), \quad h(\gamma_2)=\dfrac{1}{2}\log\alpha<\dfrac{1}{2}\log(r+1),
$$
and 
$$
h(\gamma_3)=\log U_{n+1} < \log \alpha^n=n\log\alpha<n\log(r+1),
$$
we can take
$$
A_1:=8.296\log(r+1), \quad A_2:=6.296\log(r+1), \quad A_3:=8.296n\log(r+1). 
$$
Thus $\Omega: = A_1A_2A_3 >433 n (\log(r+1))^3 >  100$ since $n\ge 2$ and $r\ge 3$.
Then,
\begin{eqnarray}
\label{eq:bprime}
b^{'}&=& \left(\dfrac{1}{6.296\log(r+1)}+\dfrac{m}{8.296\log (r+1)}\right)\left(\dfrac{x}{6.296\log(r+1)}+\dfrac{m}{8.296n\log(r+1)}\right)\nonumber\\
&< & \dfrac{m}{(\log(r+1))^2}\left(\dfrac{1}{6.296}+\dfrac{1}{8.296}\right)\dfrac{x}{\log(r+1)}\left(\dfrac{1}{6.296}+\dfrac{2}{8.296}\right)\nonumber\\
&< & \dfrac{0.12mx}{(\log(r+1))^2}.
\end{eqnarray}
In the above chain of inequalities, we used the fact that $m<(n+1)x+2<2n x$ (see inequality Eq. \eqref{Prob6}) since $n\ge 2$ and $x\ge 3$. Further, putting 
$$
\log\mathcal{B} :=\max\left\{ 0.882+\log\left(\frac{0.12mx}{(\log(r+1))^2}\right), 5\right\},
$$
we have that either the inequality
\begin{equation}
\label{eq:either}
\log |\Gamma|>-790.95\times 434 n (\log (r+1))^3\times 2^2\times (\log {\mathcal B})^2
\end{equation}
holds, or the other possibilities (i), (ii) from Theorem \ref{Mignotte1} hold. We treat (i) and (ii) later and deal with the above inequality Eq. \eqref{eq:either} at this stage. 
If $\log\mathcal{B}=5$, then 
\begin{equation}
\label{eq:x1}
x<mx<\frac{e^{5-0.882}}{0.12} (\log (r+1))^2 <512 (\log (r+1))^2.
\end{equation}
On the other hand, if $\log \mathcal{B}>5$, then 
\begin{equation}
\label{eq:lowbound}
\log \mathcal{B}:=0.882+\log \left(\frac{0.12mx}{(\log (r+1))^2}\right)<\log\left(\frac{0.3mx}{(\log(r+1))^2}\right),
\end{equation}
where in the above inequality we used the fact that $e^{0.882}\times 0.12<0.3$. 
Thus, we get that
\begin{align*}
\log|\Gamma|&>-790.95\times 434 n (\log(r+1))^3 \times 2^{2}\times \left(\log\left(\dfrac{0.3mx}{(\log(r+1))^2}\right)\right)^{2}\\
&>-1.374\times 10^{6}n (\log (r+1))^3 \left(\log\left(\dfrac{0.3mx}{(\log(r+1))^2}\right)\right)^{2}.
\end{align*}
Comparing this inequality with Eq. \eqref{Ppx}, we get that
\begin{eqnarray*}
x\log r -\log 2.2< 1.374\times 10^{6}n (\log (r+1))^3 \left(\log\left(\dfrac{0.3mx}{(\log(r+1))^2}\right)\right)^{2},
\end{eqnarray*}
which implies, via the inequality 
\begin{equation}
\label{eq:117}
\frac{\log (r+1)}{\log r}=1+\frac{\log(1+1/r)}{\log r}<1+\frac{1}{r\log r},
\end{equation}
that
\begin{eqnarray}\label{Prob8}
x &  < &  \frac{\log 2.2}{\log r}+1.374\times 10^6 n\left(\frac{\log(r+1)}{\log r}\right)(\log (r+1))^2 \left(\log\left(\dfrac{0.3mx}{(\log(r+1))^2}\right)\right)^{2}\nonumber\\
 & < & \frac{\log 2.2}{\log r}+1.374\times 10^6 n \left(1+\frac{1}{r\log r}\right) (\log (r+1))^2 \left(\log\left(\dfrac{0.3mx}{3(\log(r+1))^2}\right)\right)^{2}\nonumber\\
& < & 1.38\times 10^{6}n\left(1+\frac{1}{r\log r}\right) (\log (r+1))^2 \left(\log\left(\dfrac{0.3mx}{(\log (r+1))^2}\right)\right)^{2}.
\end{eqnarray}
Using the inequality  Eq. \eqref{Prob6}, we know that $ m< (n+1)x+2 <(n+1)(x+1)$ (because $n\ge 2$), and substituting this in \eqref{Prob8} we get that
\begin{align}\label{Prob9}
x< 1.38\times 10^{6}n \left(1+\frac{1}{r\log r}\right) (\log (r+1))^2 \left(\log\left(\dfrac{0.3(n+1)(x+1)^{2}}{(\log r)^2}\right)\right)^{2}.
\end{align}
We now turn our attention to the possibilities (i) and (ii). In case (i), there are positive integers $r_0,s_0$, which may be assumed to be coprime, such that $r_0b_2=s_0b_1$.
So, we get $r_0m=s_0$ and since $r_0,s_0$ are coprime, we take $r_0=1, s_0=m$, and we get
\begin{eqnarray*}
m & = & s_0<5.61 A_1(\mathcal{D}\log \mathcal{D})^{1/3}<5.61\times 8.296\times (2\log 2)^{1/3} \log(r+1)\\
& < & 5.61\times 8.296\times (2\log 2)^{1/3}  \log(r+1)\\
& < & 52 \log(r+1).
\end{eqnarray*}
Since $m>(n-1)x+1>x$, this situation gives
\begin{equation}
\label{eq:xi}
x<52\log(r+1).
\end{equation}
This was in situation (i). In situation (ii), we have integers $r_1,s_1,t_1,t_2$ with $r_1s_1\ne 0$ and 
$$
(t_1b_1+r_1b_3)s_1=r_1b_2t_2,\qquad \gcd(r_1,t_1)=\gcd(s_1,t_2)=1.
$$
Thus, for us, we have
$$
(t_1+r_1x)s_1=r_1mt_2,\qquad \gcd(r_1,t_1)=\gcd(s_1,t_2)=1.
$$
Reducing the above equation modulo $r_1$ we get $t_1s_1\equiv 0\pmod {r_1}$ and since $\gcd(t_1,r_1)=1$, we get that $r_1\mid s_1$. So, we put 
$s_1=r_1s_1'$ and simplify both sides of the above equation by $r_1$ to get 
$$
(t_1+r_1x)s_1'=mt_2. 
$$
Consequently, for us $\delta=\gcd(r_1,s_1)=r_1$. Hence,
\begin{eqnarray}
\label{eq:bounds}
|t_1s_1'| & < & 5.61 A_1 (\mathcal {D}\log \mathcal{D})^{1/3}<5.61\times 8.296\log(r+1)\times (2\log 2)^{1/3}<52\log(r+1);\nonumber\\
|t_2| & < & 5.61 A_2 (\mathcal {D}\log \mathcal{D})^{1/3}<5.61\times 6.296 (2\log 2)^{1/3}\log (r+1)<40\log (r+1);\\
|r_1s_1'| & < & 5.61 A_3  (\mathcal {D}\log \mathcal{D})^{1/3}<5.61 \times 8.296 (2\log 2)^{1/3} n\log (r+1)<52 n\log (r+1).\nonumber
\end{eqnarray}
Assume first that $t_2=0$. Then 
$$
x=|t_1|/|r_1|\le |t_1|<52\log(r+1),
$$
which is the same as \eqref{eq:xi}. Assume next that $t_2\ne 0$. We return to inequality Eq. \eqref{Ppx} and multiply both sides by $t_2$ and get
$$
\left|m t_2\log\gamma_2 -t_2\log \gamma_1 -xt_2\log \gamma_3\right|< \dfrac{2.2|t_2|}{r^{x}}.
$$
We substitute $mt_2$ by $t_1s_1'+(r_1s_1')x$ inside the left--hand side above and then the left--hand side above becomes 
\begin{equation}
\label{eq:222}
\left| \log\left(\frac{\gamma_2^{t_1s_1'}}{\gamma_1^{t_2}}\right)+x\log\left(\frac{\gamma_2^{r_1s_1'}}{\gamma_3^{t_2}}\right)\right|<\frac{2.2|t_2|}{r^x}.
\end{equation}
Inequality \eqref{eq:222} is of the form 
\begin{equation}
\label{eq:234}
|\Gamma_1|<\frac{2.2|t_2|}{r^x},\quad {\text{\rm where}}\quad \Gamma_1:=\log \eta_1+x\log \eta_2,
\end{equation}
and
$$
\eta_1:=\frac{\gamma_2^{t_1s_1'}}{\gamma_1^{t_2}},\quad \eta_2:=\frac{\gamma_2^{r_1s_1'}}{\gamma_3^{t_2}}.
$$
We check that $\eta_1$ and $\eta_2$ are multiplicatively independent. If not, there are integers $i,j$ not both zero such that $\eta_1^i\eta_2^j=1$. This gives
$$
\left(\frac{\gamma_2^{t_1s_1'}}{\gamma_1^{t_2}}\right)^i\left(\frac{\gamma_2^{r_1s_1'}}{\gamma_3^{t_2}}\right)^j=1.
$$
If $i\ne 0$, this gives a multiplicative dependence among $\gamma_1,\gamma_2,\gamma_3$ with the exponent of $\gamma_1$ being the nonzero integer $-t_2i$, a contradiction with the main result of Subsection \ref{submult}. 
Thus, $i=0$, so $j\ne 0$, and we get again a multiplicative relation among $\gamma_2,\gamma_3$ (the exponent of $\gamma_3$ being the nonzero integer $-t_2j$), which is the same contradiction. Thus, indeed $\eta_1$ and $\eta_2$ are multiplicatively independent and they are also positive.  So, we  are in position to apply Theorem 
\ref{Mignotte2} to the left--hand side of inequality Eq. \eqref{eq:234}. We compute $\log B_i$ for these choices. We have, by the properties \eqref{eq:heights}, 
\begin{eqnarray*}
h(\eta_1) & \le &  |t_1s_1'|h(\gamma_2)+|t_2|h(\gamma_1)\le (52\log (r+1))(1/2\log (r+1))+(40\log (r+1))\log (r+1)\\
& = & 66 (\log (r+1))^2;\\
h(\eta_2) & \le & |r_1s_1'| h(\gamma_2)+|t_2|h(\gamma_3)\le 52 n\log (r+1) (1/2\log(r+1))+(40\log(r+1))(n\log (r+1))\\
& = & 66n (\log(r+1))^2.
\end{eqnarray*}
Since $|\log \gamma_i|/2\le h(\gamma_i)$ holds for $i=1,2,3$, it follows, by the absolute value inequality, that the same inequalities are satisfied by the numbers $|\log \eta_i|/2$ for $i=1,2$. Thus, since ${\mathcal D}=2$, we can take 
$$
\log {B}_1:=66 (\log (r+1))^2,\quad \log {B}_2:=66 n(\log (r+1))^2.
$$
We bound
$$
\frac{1}{2\log B_2}+\frac{x}{2\log B_1}  =  \frac{1}{132(\log (r+1))^2} \left(\frac{1}{n}+x\right)<\frac{x+1}{132(\log(r+1))^2}.
$$
Hence, we take 
$$
b':=\frac{x+1}{132(\log(r+1))^2}.
$$
Now Theorem \ref{Mignotte2} gives
\begin{eqnarray}
\label{eq:iiii}
\log |\Lambda| & > & -24.34\times 2^4(\max\{\log b'+0.14,10.5\})^2\times (66(\log(r+1))^2)\times (66n (\log(r+1))^2)\nonumber\\
& > & -1.627\times 10^6 n(\log (r+1))^4 M^2,
\end{eqnarray}
where $M:=\max\{\log b'+0.4,10.5\}$. In case $M=10.5$, we get
$$
b'<e^{10.5-0.4}<24400,
$$
which gives
\begin{equation}
\label{eq:xii}
x+1<24400\times 132 (\log(r+1))^2<3.3\times 10^6 (\log(r+1))^2.
\end{equation}
Finally, suppose that 
$$
M=\log b'+0.4=\log(e^{0.4} b')<\log(1.5 b')=\log\left(\frac{x+1}{88(\log(r+1))^2}\right).
$$
Comparing inequality \eqref{eq:iiii} with inequality \eqref{eq:234}, we get
$$
x\log r-\log(2.2|t_2|)<1.627\times 10^6 n (\log(r+1))^4 \left(\log\left(\frac{x+1}{88(\log(r+1))^2}\right)\right)^2.
$$
Since $2.2|t_2|<88\log(r+1)$, we get that 
$$
x<\frac{\log(88\log(r+1))}{\log r}+1.627\times 10^6 \left(\frac{\log(r+1)}{\log r}\right) (\log(r+1))^3 \left(\log\left(\frac{x+1}{88(\log(r+1))^2}\right)\right)^2.
$$
The first summand in the right--hand side is $<5$ for all $r\ge 3$. Using inequality Eq. \eqref{eq:117}, we get that
\begin{equation}
\label{eq:last}
x<1.63\times 10^6 n \left(1+\frac{1}{r\log r}\right)(\log(r+1))^3 \left(\log\left(\frac{x+1}{88(\log(r+1))^2}\right)\right)^2.
\end{equation}
To summarise, either we are in the first situation of Theorem \ref{Mignotte1} and $\log \mathcal{B}=5$, in which case inequality Eq. \eqref{eq:x1} holds, or $\log \mathcal{B}>5$ in which case inequality Eq. \eqref{Prob9} holds, or we are in the exceptional case (i) for which inequality Eq. \eqref{eq:xi} holds, which is contained in inequality Eq. \eqref{eq:x1}, or we are in the exceptional situation (ii) in which case 
either inequality Eq. \eqref{eq:xii} holds, or inequality Eq. \eqref{eq:last} holds. Since inequality Eq. \eqref{eq:x1} is contained in inequality Eq. \eqref{eq:xii}, 
it follows, using the inequality $1/88<0.12$, that we proved the following result.

\begin{lemma}
\label{lem:boundsonx}
One of the following inequalities holds:
\begin{eqnarray}
x & < & 3.3\times 10^6 (\log(r+1))^2;\label{ineq1}\\
x & < & 1.38\times 10^{6}n\left(1+\frac{1}{r\log r}\right) (\log (r+1))^2 \left(\log\left(\dfrac{0.3(n+1)(x+1)^{2}}{(\log(r+1))^2}\right)\right)^{2};\label{ineq2}\\
x & < & 1.63\times 10^6 n\left(1+\frac{1}{r\log r}\right)(\log(r+1))^3 \left(\log\left(\frac{0.12(x+1)}{(\log(r+1))^2}\right)\right)^2.\label{ineq3}
\end{eqnarray}
\end{lemma}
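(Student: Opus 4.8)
The plan is to obtain all three inequalities from a single application of Mignotte's three-logarithm bound (Theorem~\ref{Mignotte1}) to the form $\Gamma$ of \eqref{Pir1}, whose smallness is quantified by \eqref{Ppx}, and then to sort the resulting trichotomy into the three advertised bounds. First I would invoke Theorem~\ref{Mignotte1} with $\gamma_1=\sqrt{r^2+4}$, $\gamma_2=\alpha$, $\gamma_3=U_{n+1}$ and $b_1=1,\ b_2=m,\ b_3=x$, the required multiplicative independence having been established in Subsection~\ref{submult}. Using $h(\gamma_1)<\log(r+1)$, $h(\gamma_2)<\tfrac{1}{2}\log(r+1)$ and $h(\gamma_3)<n\log(r+1)$ one may take the displayed $A_1,A_2,A_3$, so that $\Omega>100$, and bound $b'<0.12mx/(\log(r+1))^2$ after inserting $m<2nx$ from Lemma~\ref{mversusnx}.

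Theorem~\ref{Mignotte1} then presents three alternatives. In the generic one the lower bound on $\log|\Gamma|$ is matched against $x\log r-\log 2.2<-\log|\Gamma|$ coming from \eqref{Ppx}; if the maximum defining $\log\mathcal B$ equals the constant $5$ this gives a clean $x<512(\log(r+1))^2$, to be absorbed later, whereas otherwise replacing $m$ by $(n+1)(x+1)$ yields \eqref{ineq2}. The exceptional case (i) forces $r_0 m=s_0$ with coprime $r_0,s_0$, hence $r_0=1,\ s_0=m<52\log(r+1)$, and since $m>x$ this is a bound of the shape $x<52\log(r+1)$.

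The delicate alternative is (ii), where $(t_1+r_1x)s_1=r_1 m t_2$. Reducing modulo $r_1$ and using $\gcd(t_1,r_1)=1$ gives $r_1\mid s_1$; writing $s_1=r_1 s_1'$ reduces the relation to $(t_1+r_1x)s_1'=mt_2$. If $t_2=0$ one reads off $x\le|t_1|<52\log(r+1)$ at once. If $t_2\ne 0$, I would multiply \eqref{Ppx} by $t_2$ and substitute $mt_2=t_1 s_1'+(r_1 s_1')x$, turning $t_2\Gamma$ into the two-logarithm form $\Gamma_1=\log\eta_1+x\log\eta_2$ with $\eta_1=\gamma_2^{t_1 s_1'}/\gamma_1^{t_2}$ and $\eta_2=\gamma_2^{r_1 s_1'}/\gamma_3^{t_2}$; their multiplicative independence follows from that of $\gamma_1,\gamma_2,\gamma_3$, since any relation would leave a nonzero power of $\gamma_1$ or of $\gamma_3$. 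Applying Theorem~\ref{Mignotte2} with $\log B_1=66(\log(r+1))^2$, $\log B_2=66n(\log(r+1))^2$ and $b'=(x+1)/(132(\log(r+1))^2)$, and comparing with \eqref{eq:234}, gives either $x<3.3\times10^6(\log(r+1))^2$, i.e. \eqref{ineq1}, when the governing maximum is a constant, or \eqref{ineq3} otherwise.

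The main obstacle I anticipate lies entirely in case (ii): carrying out the modular reduction cleanly to strip off $r_1$, controlling the heights $h(\eta_1),h(\eta_2)$ through the subadditivity properties \eqref{eq:heights} combined with the size bounds \eqref{eq:bounds} on $|t_1 s_1'|,\,|t_2|,\,|r_1 s_1'|$, and re-certifying that the new pair $\eta_1,\eta_2$ is multiplicatively independent. Once all the sub-cases are assembled, consolidation is routine: the generic bound $x<512(\log(r+1))^2$, the case-(i) bound and the $t_2=0$ bound $x<52\log(r+1)$ are all dominated by \eqref{ineq1}, while the two-logarithm sub-case with nonconstant maximum becomes \eqref{ineq3} after using $1/88<0.12$ to reshape its logarithm, leaving exactly \eqref{ineq1}, \eqref{ineq2} and \eqref{ineq3}.
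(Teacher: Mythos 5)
Your proposal follows the paper's own proof essentially step for step: the same application of Theorem~\ref{Mignotte1} with $\gamma_1=\sqrt{r^2+4}$, $\gamma_2=\alpha$, $\gamma_3=U_{n+1}$, the same heights and choices of $A_1,A_2,A_3$ and $b'$, the same handling of the generic case (splitting on whether $\log\mathcal B=5$) and of the degenerate cases (i) and (ii), including the reduction modulo $r_1$, the $t_2=0$ shortcut, the two-logarithm form $\Gamma_1=\log\eta_1+x\log\eta_2$ treated by Theorem~\ref{Mignotte2} with $\log B_1=66(\log(r+1))^2$, $\log B_2=66n(\log(r+1))^2$, and the final consolidation via $1/88<0.12$. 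This is correct and is the paper's argument in all essentials.
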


\subsection{More inequalities in terms of $n$ and $x$}

We put 
\begin{equation}
\label{eq:kappa}
 \kappa:=nx+1-m
\end{equation}
Later we shall show that $\kappa$ is positive except possibly if $r=3$. In this section, assuming that it is positive, we show how it gives some lower bounds for $x$ in terms of $n$. 

\begin{lemma}\label{Luca1}
The following holds:
\begin{itemize}
\item[(i)] $\kappa\ne 1$;
\item[(ii)] If $\kappa=2$ and $n\ge 3$ then $x\ge r^{\max\{2,n-3\}}$;
\item[(iii)] If $\kappa\ge 3$, then $\kappa\ge n/2$.
\end{itemize}
\end{lemma}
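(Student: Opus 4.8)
The plan is to read off the three claims from congruences modulo powers of $U_n$, exploiting the coprimality relations in \eqref{gcd} rather than the analytic bound on $\Gamma$. The engine is the matrix identity
\[
Q:=\begin{pmatrix} r & 1\\ 1 & 0\end{pmatrix},\qquad Q^k=\begin{pmatrix} U_{k+1} & U_k\\ U_k & U_{k-1}\end{pmatrix},
\]
valid for every integer $k$, where negative indices are defined by $U_{-j}=(-1)^{j+1}U_j$ (immediate from the Binet formula Eq.~\eqref{Prob3}). Since $Q^n\equiv\mathrm{diag}(U_{n+1},U_{n-1})\pmod{U_n}$, raising to the $x$-th power gives
\[
Q^{nx}\equiv \begin{pmatrix} U_{n+1}^x & 0\\ 0 & U_{n-1}^x\end{pmatrix}\pmod{U_n}.
\]
Multiplying on the right by $Q^{1-\kappa}$ and reading off the $(1,2)$-entry of $Q^{m}=Q^{nx}Q^{1-\kappa}$ (recall $m=nx+1-\kappa$), together with $U_{1-\kappa}=(-1)^\kappa U_{\kappa-1}$ and $U_{nx}\equiv0\pmod{U_n}$, yields
\[
U_m\equiv U_{n+1}^x\,U_{1-\kappa}\equiv (-1)^\kappa U_{\kappa-1}\,U_{n+1}^x\pmod{U_n}.
\]
On the other hand, reducing Eq.~\eqref{Prob2} modulo $U_n$ gives $U_m\equiv U_{n+1}^x\pmod{U_n}$, and since $\gcd(U_n,U_{n+1})=U_1=1$ by \eqref{gcd} the factor $U_{n+1}^x$ is invertible and may be cancelled, leaving the master congruence
\[
(-1)^\kappa U_{\kappa-1}\equiv 1\pmod{U_n}.
\]

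For part (i), setting $\kappa=1$ turns this into $0\equiv -1\pmod{U_n}$, i.e. $U_n\mid 1$, impossible for $n\ge2$ since $U_n\ge U_2=r\ge3$; hence $\kappa\ne1$. For part (iii) with $\kappa\ge3$, the integer $U_{\kappa-1}-(-1)^\kappa$ lies between $U_{\kappa-1}-1\ge U_2-1\ge2$ and $U_{\kappa-1}+1$, so it is positive and divisible by $U_n$, forcing $U_n\le U_{\kappa-1}+1$. Combining $U_n\ge\alpha^{n-2}$ and $U_{\kappa-1}\le\alpha^{\kappa-2}$ from \eqref{Prob4} gives $\alpha^{n-2}\le 2\alpha^{\kappa-2}$, hence $\alpha^{n-\kappa}\le2<\alpha$ and therefore $n\le\kappa$, which is in fact stronger than the asserted $\kappa\ge n/2$.

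Part (ii) is the delicate one: for $\kappa=2$ the master congruence degenerates to the trivial $U_1\equiv1\pmod{U_n}$, so I must sharpen the computation to modulus $U_n^2$. Writing $Q^n=D+N$ with $D=\mathrm{diag}(U_{n+1},U_{n-1})$ and $N=\left(\begin{smallmatrix}0&U_n\\ U_n&0\end{smallmatrix}\right)$, one has $N^2=U_n^2 I\equiv0\pmod{U_n^2}$, so the expansion of $(D+N)^x$ truncates after the linear term in $N$; since that linear term is purely off-diagonal, the $(2,2)$-entry gives $U_{nx-1}\equiv U_{n-1}^x\pmod{U_n^2}$. Feeding this into $U_{nx-1}=U_n^x+U_{n+1}^x$ (with $U_n^x\equiv0\pmod{U_n^2}$ as $x\ge2$) and using $U_{n+1}^x=(U_{n-1}+rU_n)^x\equiv U_{n-1}^x+xr\,U_{n-1}^{x-1}U_n\pmod{U_n^2}$ forces $U_n\mid xr\,U_{n-1}^{x-1}$; because $\gcd(U_n,U_{n-1})=U_1=1$, this collapses to $U_n\mid xr$. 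As $r=U_2$ and $\gcd(U_n,U_2)=U_{\gcd(n,2)}$, this yields $x\ge U_n$ when $n$ is odd and $x\ge U_n/r$ when $n$ is even; a short check using $U_n>r^{n-2}$ for $n\ge3$ together with the exact small values $U_3,U_4,U_5$ upgrades both bounds to $x\ge r^{\max\{2,n-3\}}$.

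The main obstacle I anticipate is precisely this second-order computation in part (ii): one must correctly isolate the first-order-in-$U_n$ contributions to both $U_{nx-1}$ and $U_{n+1}^x$ modulo $U_n^2$ and confirm that the coprimality $\gcd(U_n,U_{n-1})=1$ genuinely reduces $U_n\mid xr\,U_{n-1}^{x-1}$ to the clean $U_n\mid xr$. Everything else is routine once the matrix identity and the negative-index convention are set up; the only point requiring care elsewhere is the use of $Q$ to a negative power, which is legitimate because $\det Q=-1$ makes $Q$ invertible modulo every $U_n$.
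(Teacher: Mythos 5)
Your proof is correct, and it takes a genuinely different route from the paper's, proving strictly more in part (iii). For parts (i) and (ii) your matrix computation is the combinatorial twin of the paper's argument: the paper proves (i) by picking a prime $p\mid U_n$ and noting $p\mid U_{nx}$ forces $p\mid U_{n+1}$, contradicting \eqref{gcd}, while for (ii) it works with algebraic integers, writing $\alpha^n=\beta^n+\sqrt{\Delta}\,U_n$ and expanding $\alpha^{nx}$ modulo $\Delta U_n^2$ to compute $U_{nx-1}-U_{n+1}^x\equiv -x\beta^{n(x-1)}rU_n\pmod{U_n^2}$; your splitting $Q^n=D+N$ with $N^2=U_n^2I$ reaches the same pivot $U_n\mid xr$ (do make explicit in a write-up that $D$ and $N$ do not commute, so one must argue that every word in the noncommutative expansion of $(D+N)^x$ containing at least two $N$'s carries a factor $U_n^2$, and that the single-$N$ words are all off-diagonal), after which your case analysis for $n=3,4$ and $n\ge 5$ coincides with the paper's. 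The real divergence is (iii): the paper multiplies the equation by $2$, uses the companion sequence via \eqref{addition}, \emph{squares}, and reduces with \eqref{squares} and \eqref{Pell} to get only $U_n\mid 4(U_{\kappa-1}^2-1)$; the squaring costs information and forces it to invoke Carmichael's Primitive Divisor Theorem when $\kappa-1$ is odd and a separate parity argument to dispose of the edge case $r=3$, $4\mid U_n$, all to conclude the weaker $\kappa\ge n/2$. Your unsquared master congruence $(-1)^\kappa U_{\kappa-1}\equiv 1\pmod{U_n}$, obtained from the addition law $U_{nx+1-\kappa}=U_{nx+1}U_{1-\kappa}+U_{nx}U_{-\kappa}$ together with $U_{nx+1}\equiv U_{n+1}^x\pmod{U_n}$ and the cancellation of the unit $U_{n+1}^x$ permitted by \eqref{gcd}, gives $U_n\mid U_{\kappa-1}-(-1)^\kappa$ directly — note this refines the paper's divisibility, since $U_{\kappa-1}-(-1)^\kappa$ divides $U_{\kappa-1}^2-1$ — and the positivity $U_{\kappa-1}-(-1)^\kappa\ge r-1>0$ for $\kappa\ge 3$ plus the size bounds \eqref{Prob4} then yield $\alpha^{n-\kappa}\le 2<\alpha$, hence $n\le\kappa$, with no primitive divisors and no $r=3$ caveat. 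What each approach buys: the paper's squaring trick only needs identities already catalogued in Section 2, whereas your route needs the negative-index matrix identity set up once (legitimate since $\det Q=-1$), but it repays that setup with a uniform derivation of all three parts and a cleaner, stronger (iii).
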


\begin{proof}
(i). If $\kappa=1$, then $m=nx$. So the equation \eqref{Prob2} becomes
$$
U_n^x+U_{n+1}^x=U_{nx}.
$$
If $p$ is prime dividing $U_n$ (which exists since $n>1$), then $p\mid U_n^x$ and $p\mid U_n\mid U_{nx}$, so from the above equation we get $p\mid U_{n+1}$, a contradiction since $\gcd(U_n,U_{n+1})=U_{\gcd(n,n+1)}=1$ by relation \eqref{gcd}.

\medskip 

(ii) In this case $m=nx-1$ so the equation \eqref{Prob2} becomes 
$$
U_n^x+U_{n+1}^x=U_{nx-1}.
$$
In particular, $U_{nx-1}-U_{n+1}^x\equiv 0\pmod {U_n^2}$. We study this congruence. In what follows for three algebraic integers $a,b,c$, we write $a\equiv b\pmod c$ if $(a-b)/c$ is an algebraic integer. Write 
$$
U_n=\frac{\alpha^n-\beta^n}{\alpha-\beta}\qquad {\text{\rm as}}\qquad \alpha^n=\beta^n+{\sqrt{\Delta}} U_n.
$$
Then 
$$
\alpha^{nx}=(\beta^n+{\sqrt{\Delta}}U_n)^x\equiv \beta^{nx}+x\beta^{n(x-1)} {\sqrt{\Delta}} U_n\pmod {\Delta U_n^2}.
$$
Thus, 
\begin{eqnarray*}
U_{nx-1} & = & \frac{\alpha^{nx}\alpha^{-1}-\beta^{nx-1}}{{\sqrt{\Delta}}}\\
& \equiv  & \frac{(\beta^{nx}+x\beta^{n(x-1)}{\sqrt{\Delta}} U_n)\alpha^{-1}-\beta^{nx-1}}{\alpha-\beta}\pmod {{\sqrt{\Delta}} U_n^2}\\
& \equiv & \frac{\beta^{nx}\alpha^{-1}-\beta^{nx}\beta^{-1}}{\alpha-\beta}+x\beta^{n(x-1)}\alpha^{-1} U_n\pmod {U_n^2}\\
& \equiv & \beta^{nx}+x\beta^{n(x-1)}\alpha^{-1} U_n \pmod {U_n^2}.
\end{eqnarray*}
On the other hand,
\begin{eqnarray*}
U_{n+1}^x & = & \left(\frac{\alpha^{n+1}-\beta^{n+1}}{{\sqrt{\Delta}}}\right)^x\\
& \equiv  & \left(\frac{(\beta^n+{\sqrt{\Delta}} U_n)\alpha-\beta^{n+1}}{\sqrt{\Delta}}\right)^x\pmod {U_n^2}\\
& \equiv & (\beta^n+U_n\alpha )^x\pmod {U_n^2}\\
& \equiv & \beta^{nx}+x\beta^{n(x-1)} \alpha U_n \pmod {U_n^2}.
\end{eqnarray*}
Thus, 
\begin{eqnarray*}
U_{nx-1}-U_{n+1}^x & \equiv &  (\beta^{nx}+x\beta^{n(x-1)}\alpha^{-1} U_n)-(\beta^{nx}+x\beta^{n(x-1)}\alpha U_n)\pmod {U_n^2}\\
&\equiv & x\beta^{n(x-1)}(\alpha^{-1}-\alpha)U_n\pmod {U_n^2}\\
& \equiv & -x\beta^{n(x-1} rU_n\pmod {U_n^2}.
\end{eqnarray*}
In the last step above, we used the fact that $\alpha^{-1}-\alpha=-\beta-\alpha=-r$. Since the expression $U_{nx-1}-U_{n+1}^x$ is divisible by $U_n^2$, we get that $U_n^2\mid \beta^{n(x-1)} xr U_n$. 
Since $\beta$ is a unit, we get that $U_n\mid xr$. For $n=2$, this gives us nothing since $U_2=r$. For $n=3$, $U_3=r^2+1$ is coprime to $r$, so $U_3\mid x$, which gives $x\ge r^2+1>r^2$.
For $n=4$, we have that $U_4=r(r^2+2)$ divides $rx$, so $r^2+2\mid x$ giving $x\ge r^2+2>r^2$. Finally, for $n\ge 5$, we have that $U_n>\alpha^{n-2}>r^{n-2}$ and so
$x\ge U_n/r\ge r^{n-3}$. This proves (ii).

\medskip

(iii) We may assume that $n\ge 7$, otherwise the conclusion is trivial. Recall that $V_n=\alpha^n+\beta^n$. Relations Eq. \eqref{squares} and \eqref{Pell} are 
$$
U_{n+1}^2-U_nU_{n+2}=(-1)^n\qquad {\text{\rm and}}\qquad V_n^2-\Delta U_n^2=4(-1)^n.
$$
In particular, $U_{n+1}^2\equiv (-1)^n\pmod {U_n}$ and $V_n^2\equiv 4(-1)^n\pmod {U_n}$. We also use the fact  that $U_{-m}=(-1)^{m-1}U_m$, $V_{-m}=(-1)^mV_m$ and relation Eq. \eqref{addition} which is
$$
2U_{m+n}=U_mV_n+U_nV_m.
$$
Armed with these facts, writing $m=nx-(\kappa-1)$ and 
$$
U_n^x+U_{n+1}^x=U_{nx-(\kappa-1)},
$$
we multiply both sides of the above equation by $2$ and write
$$
2U_n^x+2U_{n+1}^x=2U_{nx-(\kappa-1)}=U_{nx}V_{-(\kappa-1)}+V_{nx}U_{-(\kappa-1)}.
$$
We  square both sides of the above equation and reduce it modulo $U_n$ taking into account that $U_n\mid U_{nx}$ and $V_{nx}^2\equiv 4(-1)^{nx}\pmod {U_{nx}}\equiv 4(-1)^{nx}\pmod {U_n}$, and get
\begin{eqnarray*}
4 (-1)^{nx}& \equiv & 4(U_{n+1}^2)^x\pmod {U_n}\equiv (U_{nx}V_{-(\kappa-1)}+ V_{nx}U_{-(\kappa-1)})^2\pmod {U_n}\\
& \equiv & V_{nx}^2 U_{-(\kappa-1)}^2\pmod {U_n}\equiv V_{nx}^2 U_{\kappa-1}^2\pmod {U_n}\equiv  4(-1)^{nx}U_{\kappa-1}^2\pmod {U_n}.
\end{eqnarray*}
Thus, $U_n\mid 4(U_{\kappa-1}^2-1)$. The right--hand side is nonzero since $\kappa>2$. If $\kappa-1$ is odd, then 
$$
4(U_{\kappa-1}^2-1)=4U_{\kappa-2}U_{\kappa}
$$
by relation Eq. \eqref{squares} (with $n+1:=\kappa-1$). Since $n\ge 7$ and ${\bf U}$ is not the Fibonacci sequence, it follows that $U_n$ has a primitive divisor $p$, which must divide one of $U_{\kappa-2}$ or $U_{\kappa}$. Thus, $z(p)=n$ 
divides one of $\kappa-2$ or $\kappa$, so we get $\kappa\ge n$, which is a better conclusion than the one desired. If $\kappa-1$ is even and $U_n\mid 2(U_{\kappa-1}^2-1)$, we then get
$$
\alpha^{n-2}< U_n<2U_{\kappa-1}^2<r(\alpha^{\kappa-1})^2<\alpha^{2\kappa-1},
$$
so $2\kappa-2\ge n-2$, therefore $\kappa\ge n/2$. The above argument was based on the fact that $r\ge 2$. In particular, if $r\ge 4$, then the same argument gives again that
$$
\alpha^{n-2}<U_n<4U_{\kappa-1}^2<r(\alpha^{\kappa-1})^2<\alpha^{2\kappa-1},
$$
so $\kappa\ge n/2$. So, the only case when the above arguments fail are when $r=3$ and $4\mid U_n$. It then follows that $n$ is even (in fact, $n$ is a multiple of $6$, but we shall not need that), so $r\mid U_n\mid 4(U_{\kappa-1}^2-1)$.
But $U_{\kappa-1}$ is a multiple of $r$ (since $\kappa-1$ is even), so $U_{\kappa-1}^2-1$ is coprime to $r$. Thus, $r\mid 4$, which is false. This finishes the proof of (iii).
\end{proof}

\begin{corollary}
If $\kappa>0$, then $x>n/2$.
\end{corollary}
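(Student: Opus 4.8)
The plan is to read off the corollary from Lemma \ref{Luca1}, the one extra ingredient being the upper bound $\kappa<x$ supplied by Lemma \ref{mversusnx}. Since $\kappa=nx+1-m$ is an integer and $\kappa>0$, we have $\kappa\ge 1$, and part (i) of Lemma \ref{Luca1} rules out $\kappa=1$, so in fact $\kappa\ge 2$.

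First I would record the upper bound on $\kappa$. By the left inequality in Eq. \eqref{Prob6} we have $m>(n-1)x+1$, and since $m$ is an integer this gives $m\ge (n-1)x+2$, hence
\[
\kappa=nx+1-m\le nx+1-(n-1)x-2=x-1<x .
\]
Thus $x>\kappa$ holds in every case, and it will do the rest of the work.

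Next I would split according to the size of $\kappa$. If $\kappa\ge 3$, then Lemma \ref{Luca1}(iii) gives $\kappa\ge n/2$, and combining this with $x>\kappa$ yields $x>n/2$ at once. If $\kappa=2$, I would instead invoke Lemma \ref{Luca1}(ii): for $n\ge 3$ it gives $x\ge r^{\max\{2,\,n-3\}}\ge 3^{\max\{2,\,n-3\}}$, and since $3^2=9>n/2$ settles $3\le n\le 5$ while $3^{n-3}$ dominates $n/2$ for all $n\ge 6$, we again get $x>n/2$. The only leftover case is $\kappa=2$ with $n=2$, which is covered by the standing assumption $x\ge 3>1=n/2$.

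The argument is short, and the only genuinely delicate point is the borderline value $\kappa=2$: here Lemma \ref{Luca1}(ii) must be used, and for $n=2$ it does not apply, so one falls back on the standing hypothesis $x\ge 3$. The other thing worth checking explicitly, though it is immediate, is that the exponential lower bound $r^{\max\{2,n-3\}}$ really does exceed $n/2$ across the full range $n\ge 3$, which is what closes the $\kappa=2$ case.
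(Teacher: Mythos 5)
Your proof is correct and follows essentially the same route as the paper: split on $\kappa=2$ versus $\kappa\ge 3$, use Lemma \ref{Luca1}(ii) in the first case and Lemma \ref{Luca1}(iii) combined with the lower bound $m>(n-1)x+1$ from \eqref{Prob6} in the second (your intermediate step $\kappa\le x-1$ is just an algebraic repackaging of the paper's comparison $1+nx-n/2\ge m>(n-1)x+1$). If anything, you are slightly more careful than the paper, which asserts the $\kappa=2$ bound ``for any $n\ge2$'' even though Lemma \ref{Luca1}(ii) requires $n\ge3$ (and has a $\min$/$\max$ typo there); your fallback to the standing hypothesis $x\ge3>1=n/2$ for $n=2$ correctly closes that gap.
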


\begin{proof}
By Lemma \ref{Luca1}, if $\kappa=2$, then $x\ge r^{\min\{2,n-3\}}\ge 3^{\min\{2,n-3\}}>n/2$ for any $n\ge 2$. If $\kappa\ge 3$, then 
$$
1+nx-m=\kappa\ge n/2,
$$
which leads to $1+nx-n/2\ge m$. Comparing this with the lower bound $m>(n-1)x+1$ given by inequality Eq. \eqref{Prob6}, we get $x>n/2$.
\end{proof}

\subsection{Another inequality among $r,n,m,x$}

In this section, we return to inequality Eq. \eqref{Ppx} and rewrite it in order to deduce a good approximation of $\log r$ by a rational number whose denominator is a multiple of $r^2$. Let's get to work. We need approximations of $\log \alpha$ and $\log {\sqrt{r^2+4}}$ in terms of $\log r$.

\begin{lemma}
\label{lem:approx}
For $r\ge 3$, we have the following approximations:
\begin{eqnarray}
\log {\sqrt{r^2+4}} & = & \log r+\frac{2}{r^2}+\zeta,\quad |\zeta|<\frac{5.81}{r^4};\label{rsquare}\\
\log \alpha & = & \log r+\frac{1}{r^2}+\zeta',\quad |\zeta'|<\frac{3.64}{r^4}.\label{alpha}
\end{eqnarray}
\end{lemma}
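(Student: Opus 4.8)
The plan is to reduce both approximations to the single elementary estimate
\[
0<t-\log(1+t)\le \frac{t^2}{2}\qquad(t>0),
\]
which follows from $t-\log(1+t)=\int_0^t \frac{s}{1+s}\,ds\le \int_0^t s\,ds$, applied with a quantity $t$ of size $O(r^{-2})$. No delicate estimates are needed, since the stated error constants $5.81$ and $3.64$ are generous.

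For Eq.~\eqref{rsquare} I would factor $r^2$ out of the discriminant and write
\[
\log{\sqrt{r^2+4}}=\log r+\frac12\log\left(1+\frac{4}{r^2}\right),
\]
so that $\zeta=\frac12\bigl(\log(1+4/r^2)-4/r^2\bigr)$. With $t:=4/r^2\in(0,4/9]$ for $r\ge 3$, the estimate above gives $|\zeta|\le \frac12\cdot \frac{t^2}{2}=\frac{4}{r^4}<\frac{5.81}{r^4}$, which is exactly Eq.~\eqref{rsquare}. The bound is uniform in $r$, so no separate worst-case check at $r=3$ is required.

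For Eq.~\eqref{alpha} the point is to avoid a messy double expansion of $\alpha=(r+\sqrt{r^2+4})/2$ by exploiting the defining relation $\alpha^2=r\alpha+1$. Dividing by $\alpha$ gives $\alpha=r+1/\alpha$, hence $\alpha/r=1+\delta$ with $\delta:=\frac{1}{r\alpha}$, and therefore $\log\alpha=\log r+\log(1+\delta)$, i.e. $\zeta'=\bigl(\delta-\frac{1}{r^2}\bigr)+\bigl(\log(1+\delta)-\delta\bigr)$. The clean simplification, using $\alpha-r=1/\alpha$ once more, is
\[
\delta-\frac{1}{r^2}=\frac{1}{r\alpha}-\frac{1}{r^2}=\frac{r-\alpha}{r^2\alpha}=-\frac{1}{r^2\alpha^2},
\]
so that $|\delta-1/r^2|=\frac{1}{r^2\alpha^2}<\frac{1}{r^4}$ since $\alpha>r$. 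Combining this with $|\log(1+\delta)-\delta|\le \delta^2/2<\frac{1}{2r^4}$ (again because $\delta<1/r^2$) yields $|\zeta'|<\frac{1}{r^4}+\frac{1}{2r^4}=\frac{3}{2r^4}<\frac{3.64}{r^4}$, which is Eq.~\eqref{alpha}.

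The only genuinely non-mechanical ingredient is the identity $\delta-1/r^2=-1/(r^2\alpha^2)$, which turns what looks like a two-step Taylor expansion (first the square root, then the logarithm) into a one-line estimate. I expect the only obstacle to be routine bookkeeping: confirming that the stated constants comfortably dominate the true leading constants ($4$ versus $5.81$, and $3/2$ versus $3.64$) for every $r\ge 3$, which they do with room to spare.
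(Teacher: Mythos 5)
Your proof is correct, and for the $\log\alpha$ estimate it takes a genuinely different and cleaner route than the paper. For Eq.~\eqref{rsquare} you and the paper use the same decomposition $\log\sqrt{r^2+4}=\log r+\frac{1}{2}\log(1+4/r^2)$; the difference lies only in the remainder estimate. The paper bounds $|\log(1+z)-z|$ by the positive series $\sum_{k\ge 2}z^k/k$ evaluated at the worst case $z=4/9$, which is what produces the constant $5.81$ (essentially sharp for that method since it ignores the alternating signs), whereas your integral inequality $0<t-\log(1+t)\le t^2/2$ exploits the sign of the error and yields the sharper constant $4$. For Eq.~\eqref{alpha} the divergence is more substantial: the paper writes $\alpha/r=\frac{1}{2}+\sqrt{\frac{1}{4}+\frac{1}{r^2}}$ and performs a nested Taylor expansion --- a binomial-series expansion of the square root, with explicit bounds on the binomial coefficients, followed by the logarithm series --- assembling the constant $3.64$ from two worst-case constants computed at $r=3$. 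You bypass all of this by using the unit relation $\alpha=r+1/\alpha$, which gives the exact identity $\delta-\frac{1}{r^2}=-\frac{1}{r^2\alpha^2}$ for $\delta=\frac{1}{r\alpha}$, so that a single application of the same logarithm inequality (legitimate since $0<\delta<1/r^2$ and $\alpha>r$) gives $|\zeta'|<\frac{3}{2r^4}$. Your argument is shorter, avoids all the series bookkeeping and auxiliary constants $c_1,c_2$ of the paper, and proves strictly stronger bounds ($4$ versus $5.81$, and $3/2$ versus $3.64$), which of course still imply the lemma as stated and would propagate harmlessly, indeed favorably, through the later estimates such as Lemma~\ref{lem:better}.
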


\begin{proof}
We have
$$
\log {\sqrt{r^2+4}}=\frac{1}{2}\log(r^2+4)=\log r+\frac{1}{2}\log\left(1+\frac{4}{r^2}\right).
$$
With $z:=4/r^2$, we have $|z|\le 4/9$ and 
$$
\log(1+z)=z+\zeta_1,\quad |\zeta_1|\le \sum_{k\ge 2} \frac{z^k}{k}=\frac{z^2}{2}\left(1+(2/3)z+(2/4)z^2+\cdots\right).
$$
The expression in parenthesis above is smaller than
$$
c_1:=1+(2/3)(4/9)+(2/4)(4/9)^2+\cdots=(-4/9-\log(1-4/9))\times 2\times (9/4)^2.
$$
Hence, 
$$
\log{\sqrt{r^2+4}}=\log r+\frac{2}{r^2}+\frac{\zeta_1}{2}:=\log r+\frac{2}{r^2}+\zeta,\quad |\zeta|=\frac{|\zeta_1|}{2}<\left(\frac{c_1}{4}\right)z^2=\frac{4c_1}{r^4}<\frac{5.81}{r^4}.
$$
For $\alpha$, we write 
$$
\log \alpha=\log r+\log\left(\frac{1}{2}+{\sqrt{\frac{1}{4}+\frac{1}{r^2}}}\right)=\log r+\log(1+z_1),\quad z_1:={\sqrt{\frac{1}{4}+\frac{z}{4}}}-\frac{1}{2}.
$$
Note that $|z_1|\le 1/r^2$. Thus, 
$$
\log(1+z_1)=z_1+\zeta_2, \quad |\zeta_2|\le \frac{|z_1|^2}{2} (1+(2/3)|z_1|+\cdots)\le \frac{c_2}{2r^4},
$$
where by the previous arguments, 
$$
c_2=(-\lambda_0-\log(1-\lambda_0))\times 2 \times \lambda_0^{-2},\quad {\text{\rm with}}\quad \lambda_0:={\sqrt{\frac{1}{4}+\frac{1}{9}}}-\frac{1}{2}.
$$
It remains to expand $z_1$. For this, we have
$$
z_1=\frac{1}{2}({\sqrt{1+z}}-1)=\frac{1}{2}\left(\frac{z}{2}+\zeta_3\right),\quad |\zeta_3|\le \sum_{k\ge 2} \left|\binom{k}{1/2}\right| z^k.
$$
Since $|\binom{k}{1/2}|\le 1/4$ for all $k\ge 1$, it follows that
$$
|\zeta_3|\le \frac{1}{4}\sum_{k\ge 2} z^k=\frac{z^2}{4(1-z)}\le \frac{36}{5r^4}.
$$
Hence,
$$
\log \alpha=\log r+\frac{z}{4}+\left(\zeta_2+\frac{\zeta_3}{2}\right)=:\log r+\frac{1}{r^2}+\zeta',\quad |\zeta'|<\left(\frac{c_2}{2}+\frac{36}{10}\right)\frac{1}{r^2}<\frac{3.64}{r^4}.
$$
\end{proof}

The following estimate is the main result of this section. 

\begin{lemma}
\label{lem:better}
If $r\ge 4$, then $\kappa>0$. Furthermore,
\begin{equation}
\label{eq:lowbound}
x>\frac{\kappa r^2\log r+1}{1+5/r}.
\end{equation}
\end{lemma}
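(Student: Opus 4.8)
The plan is to rewrite the linear form $\Gamma$ of Eq. \eqref{Pir1} so that the integer $\kappa$ appears explicitly, and then to substitute the approximations of Lemma \ref{lem:approx}. Since $\alpha\beta=-1$, I would write $U_{n+1}=\frac{\alpha^{n+1}}{\alpha-\beta}\bigl(1-(\beta/\alpha)^{n+1}\bigr)$, so that $\log U_{n+1}=(n+1)\log\alpha-\log(\alpha-\beta)+\epsilon$ where $\epsilon:=\log\bigl(1-(\beta/\alpha)^{n+1}\bigr)$ satisfies $|\epsilon|\le 2\alpha^{-2(n+1)}<2r^{-2(n+1)}$. Feeding this into $\Gamma=m\log\alpha-\log(\alpha-\beta)-x\log U_{n+1}$ and replacing $m$ by $nx+1-\kappa$ (so the coefficient of $\log\alpha$ becomes $-(x-1)-\kappa$) collapses $\Gamma$ to the identity
\[
\Gamma=(x-1)\bigl(\log(\alpha-\beta)-\log\alpha\bigr)-\kappa\log\alpha-x\epsilon.
\]
Lemma \ref{lem:approx} then supplies $\log(\alpha-\beta)-\log\alpha=\frac{1}{r^2}+(\zeta-\zeta')$ and $\log\alpha=\log r+\frac{1}{r^2}+\zeta'$, with $|\zeta-\zeta'|<9.45/r^4$.

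For the positivity of $\kappa$, I would argue by contradiction: if $\kappa\le 0$, then in this expression every leading term is nonnegative. The factor $\frac{1}{r^2}+\zeta-\zeta'$ is positive exactly because $r\ge 4$ forces $1/r^2>9.45/r^4$ (this is the sole place the hypothesis $r\ge4$ enters, and the inequality fails at $r=3$), and $-\kappa\bigl(\log r+\frac1{r^2}+\zeta'\bigr)\ge0$ since $\log r+\frac1{r^2}+\zeta'>0$. Hence $\Gamma\ge (x-1)\bigl(\frac1{r^2}+\zeta-\zeta'\bigr)-x|\epsilon|$; because $x|\epsilon|$ is smaller than the first term by a factor of order $r^{-2n}$, this gives $\Gamma>0.4(x-1)/r^2\ge 0.8/r^2$, contradicting $|\Gamma|<2.2/r^x\le 2.2/r^3<0.8/r^2$ for $r\ge4$, $x\ge3$. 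Therefore $\kappa>0$.

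For the quantitative bound Eq. \eqref{eq:lowbound}, I would solve the identity for $\kappa r^2\log r$ after multiplying by $r^2$:
\[
\kappa r^2\log r=(x-1)\bigl(1+r^2(\zeta-\zeta')\bigr)-\kappa(1+r^2\zeta')-r^2x\epsilon-r^2\Gamma.
\]
Since $\kappa>0$ and $1+r^2\zeta'>0$, the term $-\kappa(1+r^2\zeta')$ is negative and can be discarded for an upper bound; using $r^2|\zeta-\zeta'|<9.45/r^2$, $r^2x|\epsilon|<2x/r^{2n}$, and $r^2|\Gamma|<2.2r^{2-x}$, I obtain $\kappa r^2\log r+1<x+\frac{9.45(x-1)}{r^2}+\frac{2x}{r^{2n}}+2.2r^{2-x}$. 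It then remains to verify the elementary inequality $\frac{9.45(x-1)}{r^2}+\frac{2x}{r^{2n}}+2.2r^{2-x}<\frac{5x}{r}$, which holds for every $r\ge4$, $n\ge2$, $x\ge3$ because $\frac5r-\frac{9.45}{r^2}>0$ dominates while the remaining two terms are negligible; this is precisely $\kappa r^2\log r+1<(1+5/r)x$, i.e. Eq. \eqref{eq:lowbound}.

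The main difficulty I anticipate is the simultaneous bookkeeping of the several small errors so that the single loose buffer $5/r$ absorbs all of them at once. In particular one must notice that the $x\epsilon$ contribution, although potentially large in absolute value when $x$ is enormous, is small relative to the dominant $(x-1)/r^2$ term (their ratio is $O(r^{-2n})$), and that the constant $9.45=5.81+3.64$ coming from $|\zeta|+|\zeta'|$ stays safely below $5r$ once $r\ge4$. Securing the strict positivity of $\frac1{r^2}+\zeta-\zeta'$ exactly at the threshold $r\ge4$ is what separates this case from the excluded $r=3$, and it is the delicate point on which both halves of the lemma rest.
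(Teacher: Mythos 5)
Your proof is correct, and it follows the paper's overall strategy --- expand $\log U_{n+1}=(n+1)\log\alpha-\log(\alpha-\beta)+\epsilon$, substitute the approximations of Lemma \ref{lem:approx} into $\Gamma$, and play the resulting two-sided estimate against $|\Gamma|<2.2/r^x$ --- but your bookkeeping is genuinely sharper in one respect that changes the shape of the proof. The paper expands the coefficient $(m-x(n+1))$ of $\log\alpha$ via the crude bound $|m-x(n+1)|\le 2(x-1)$, incurring an error of $13.35(x-1)/r^4$; with that constant the positivity argument breaks down at $(r,x)=(4,3)$ (one needs $1\le 13.35/r^2+2.2/(r^{x-2}(x-1))$ to fail, and at $(4,3)$ it does not), so the paper must dispose of $(r,x)=(4,3)$ separately by proving $U_n^3+U_{n+1}^3<U_{3n+1}$ for $r=4$ directly from the Binet formula. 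Your grouping $\Gamma=(x-1)\bigl(\log(\alpha-\beta)-\log\alpha\bigr)-\kappa\log\alpha-x\epsilon$ keeps the $\kappa$-term exact and signed (so no $\kappa\zeta'$ error ever enters: for positivity it is $\ge 0$ when $\kappa\le 0$, and in the quantitative step $-\kappa(1+r^2\zeta')<0$ is simply discarded), which cuts the error to $9.45(x-1)/r^4$ and makes the contradiction go through uniformly, including at $(r,x)=(4,3)$: there $\Gamma\ge 2(1/16-9.45/256)-x|\epsilon|>0.049$ while $2.2/4^3<0.035$. So your proof eliminates the paper's special-case computation entirely, at the cost of nothing. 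Two harmless nits: your headline constant $0.4(x-1)/r^2$ is a slight overstatement once $x|\epsilon|$ is subtracted (at $r=4$ the true factor is about $0.397$), though the contradiction with $2.2/r^3\le 0.55/r^2$ survives with room to spare; and the final buffer check is best done explicitly --- $9.45(x-1)/r^2\le 2.37\,x/r$, $2x/r^{2n}\le 0.04\,x/r$, $2.2r^{2-x}\le 2.2/r\le 0.74\,x/r$ for $r\ge 4$, $x\ge 3$, summing to under $3.2\,x/r<5x/r$ --- which confirms your claim and in fact shows the $5/r$ buffer is loose.
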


\begin{proof}
We shall use the approximations given in Lemma \ref{lem:approx} but we also need an approximation of $\log U_{n+1}$. We have 
\begin{equation}
\label{eq:Un}
\log U_{n+1}=\log \left(\frac{\alpha^{n+1}}{\alpha-\beta}\left(1-\left(\frac{\beta}{\alpha}\right)^{n+1}\right)\right)=(n+1)\log \alpha-\log ({\sqrt{r^2+4}})+\zeta'',
\end{equation}
where
$$
\zeta''=\log\left(1-\left(\frac{\beta}{\alpha}\right)^{n+1}\right).
$$
Since $\beta=-\alpha^{-1}$, it follows that $|\beta/\alpha|=1/\alpha^2$. Thus, 
\begin{equation}
\label{eq:zeta''}
|\zeta''|\le \frac{1}{\alpha^{2n+2}}\left(1+\sum_{k\ge 1} \frac{1}{k(\alpha^{2n+2})^k}\right)\le \frac{1}{\alpha^{2n+2}}\left(1+\frac{1}{2(1-1/\alpha^{2n+2})}\right)<\frac{1.51}{\alpha^{2n+2}},
\end{equation}
where for the last inequality we used the fact that $\alpha>r\ge 3$ and $n\ge 2$. Inserting estimates Eq. \eqref{rsquare} and Eq. \eqref{alpha} together with Eq. \eqref{eq:Un} into inequality Eq. \eqref{Ppx}, we get
\begin{eqnarray*}
|\Gamma| & = & \left|m\log \alpha-\log({\sqrt{r^2+4}})-x((n+1)\log \alpha-\log({\sqrt{r^2+4}})+\zeta'')\right|\\
& = & \left|(m-x(n+1))\log \alpha+(x-1)\log ({\sqrt{r^2+4}})-x\zeta''\right|\\
& = & \left| (m-x(n+1))\left(\log r+\frac{1}{r^2}+\zeta'\right)+(x-1)\left(\log r+\frac{2}{r^2}+\zeta\right)-x\zeta''\right|\\
& = & \left| (m-nx-1)\log r+\frac{(m-nx-1)+(x-1)}{r^2}+(m-x(n+1))\zeta'+(x-1)\zeta+x\zeta''\right|.
\end{eqnarray*}
We recognise the coefficient of $\log r$ as the number we denoted $-\kappa$ in \eqref{eq:kappa}. Using inequality Eq. \eqref{Ppx}, we get
$$
\left|-\kappa \log r+\frac{-\kappa+(x-1)}{r^2}\right|<\frac{2.2}{r^x}+|m-x(n+1)||\zeta'|+|x-1||\zeta|+x|\zeta''|.
$$
Inequality Eq. \eqref{Prob6} shows that $m-x(n+1)\in [-2x+2,2]$. In particular, $|m-x(n+1)|\le 2(x-1)$. We thus get, by estimates  Eq. \eqref{rsquare} and Eq. \eqref{alpha} together with Eq. \eqref{eq:Un}, that
$$
\left|-\kappa \log r+\frac{-\kappa+(x-1)}{r^2}\right|<\frac{4}{r^x}+\frac{7.28(x-1)}{r^4}+\frac{5.81(x-1)}{r^4}+\frac{{1.51x}}{r^6}.
$$
Since $x\ge 3$ and $r\ge 3$, we get that the last term satisfies
$$
\frac{1.51 x}{r^6}\le \left(\frac{1.51\times (3/2)}{3^2}\right)\frac{(x-1)}{r^4}<\frac{0.26(x-1)}{r^4}.
$$
Hence,
\begin{equation}
\label{eq:ttt}
\left|-\kappa \log r+\frac{-\kappa+(x-1)}{r^2}\right|<\frac{2.2}{r^x}+\frac{13.35(x-1)}{r^4}.
\end{equation}
Assume that $\kappa\le 0$. We then  get that
$$
\frac{x-1}{r^2}\le \frac{2.2}{r^x}+\frac{13.35(x-1)}{r^4},
$$
which implies that
$$
1\le \frac{13.35}{r^2}+\frac{2.2}{r^{x-2}(x-1)}.
$$
If $r\ge 5$, the right hand side is 
$$
\le \frac{13.35}{25}+\frac{2.2}{5\times 4}=\frac{13.54}{16}<1,
$$
a contradiction. Similarly, if $r\ge 4$ and $x\ge 4$, then the right side is 
$$
\le \frac{13.25}{16}+\frac{2.2}{16\times 3}<\frac{14}{16}<1.
$$
Thus, if $r\ge 5$ or $r=4$ and $x\ge 4$, then $\kappa>0$ and now Lemma \ref{Luca1} applies. We will show at the end of this proof that $\kappa\ge 0$ for 
$(r,x)=(4,3)$ as well. Multiplying both sides of estimate \eqref{eq:ttt} by $r^2$, we get
$$
|-\kappa(r^2\log r+1)+(x-1)|<(x-1)\left(\frac{13.35}{r^2}+\frac{2.2}{r^{x-2}(x-1)}\right).
$$
Hence, 
$$
\kappa(r^2\log r+1)<(x-1)\left(1+\left(\frac{13.35}{r^2}+\frac{2.2}{r^{x-2}(x-1)}\right)\right)\le (x-1)\left(1+\frac{5}{r}\right),
$$
which gives estimate Eq. \eqref{eq:lowbound}. It remains to treat the case $(r,x)=(4,3)$. By inequality \eqref{Prob6}, we have $m<3(n+1)+1=3n+4$ so $\kappa=3n+1-m\ge -2$. 
So, the only instances in which $\kappa\le 0$ is possible  are when $m=3n+3,~m=3n+2,~3n+1$. Well, let us show that this is not possible by proving that 
$$
U_n^3+U_{n+1}^3<U_{3n+1}.
$$
Using the Binet formula Eq. \eqref{Prob3}, this is implied by 
$$
\alpha^{-1}\left(1+\frac{1}{\alpha^{6n}}\right)^3+\alpha^2\left(1+\frac{1}{\alpha^{6n+6}}\right)^3<\Delta\left(1-\frac{1}{\alpha^{6n+3}}\right),
$$
with $\alpha=2+{\sqrt{5}}$ and $\Delta=20$. The function of $n$ in the left is decreasing and the function with $n$ in the right is increasing, and the inequality holds at $n=1$ (the left--hand side there is $<18.5$ and the right side is $>19.5$), so it holds for all $n\ge 1$. Thus, $\kappa\ge 2$ for $r=4$ as well.
\end{proof}

\subsection{The case $n\le 100$}

We first seek bounds on $r$. Having the bounds in $r$ and $n$, we get bounds on $x$ using Lemma \ref{lem:boundsonx}. Finally, for a fixed $r$ we use Baker-Davenport on estimate Eq. \eqref{Ppx} to lower $x$. The hope is that in all cases $x\le 100$, a case which has already been treated. 

We prove the following result.

\begin{lemma}
When $n\le 100$, we have $r\le 1.5\times 10^6$. 
\end{lemma}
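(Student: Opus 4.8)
The plan is to bound $r$ from above under the assumption $n\le 100$, by feeding the lower bound for $x$ coming from Lemma~\ref{lem:better} into the upper bounds for $x$ collected in Lemma~\ref{lem:boundsonx}. The starting point is the observation that, for $r\ge 4$, Lemma~\ref{lem:better} gives $\kappa>0$ and hence
$$
x>\frac{\kappa r^2\log r+1}{1+5/r}\ge \frac{r^2\log r+1}{1+5/r},
$$
using $\kappa\ge 1$. This is a lower bound for $x$ that grows like $r^2\log r$. On the other hand, each of the three inequalities \eqref{ineq1}, \eqref{ineq2}, \eqref{ineq3} of Lemma~\ref{lem:boundsonx} bounds $x$ from above by something of the form (constant)$\cdot n\cdot(\log(r+1))^{2\text{ or }3}\cdot(\log(\cdots))^2$. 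Since $n\le 100$ is now fixed as a bounded parameter, each such upper bound is essentially $(\log(r+1))^3$ times a squared logarithm, which is dwarfed by $r^2\log r$ as $r\to\infty$. Confronting the lower bound with each upper bound therefore forces $r$ to be bounded.

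Concretely, I would treat the three cases of Lemma~\ref{lem:boundsonx} separately. First set $r\ge 4$ (the case $r=3$ is excluded from Lemma~\ref{lem:better} and must be handled elsewhere, so here we only need $r\le 1.5\times10^6$ for $r\ge 4$, the bound being trivially compatible with $r=3$). Combining the lower bound with \eqref{ineq1} gives
$$
\frac{r^2\log r+1}{1+5/r}<x<3.3\times 10^6(\log(r+1))^2,
$$
an inequality in $r$ alone whose left side eventually exceeds the right; solving it numerically yields an explicit upper bound on $r$. For \eqref{ineq2} and \eqref{ineq3} I would substitute $n\le 100$ and the crude upper bound $x<3.3\times10^6(\log(r+1))^2$ (or whichever of the three applies) inside the logarithmic factor $\log(\cdots)$, so that the right side becomes a function of $r$ alone of size $O\bigl(n(\log(r+1))^3(\log\log(\cdots))^2\bigr)$; confronting this with the lower bound $x>(r^2\log r+1)/(1+5/r)$ again produces an explicit inequality in $r$. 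Taking the maximum of the three resulting bounds on $r$ gives the claimed $r\le 1.5\times 10^6$.

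The main obstacle is purely computational bookkeeping rather than conceptual: one must verify that the numerical constant $1.5\times 10^6$ is actually large enough to dominate all three cases, which requires care with the nested logarithms in \eqref{ineq2} and \eqref{ineq3} and with the factor $n\le 100$. The cleanest way to control the logarithmic factors is to use the elementary fact that for $t$ large one has $(\log t)^2\le \varepsilon\, t$, so that a bound of the shape $x<C(\log(r+1))^3(\log(x+1))^2$ together with $x>c r^2\log r$ can be turned into a bound $r^2\log r<C'(\log(r+1))^3(\log(r^2\log r))^2$; the left side grows strictly faster, so the inequality fails once $r$ exceeds an explicit threshold, and one checks that this threshold lies below $1.5\times 10^6$. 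A minor secondary point is to confirm that the hypotheses of Lemma~\ref{lem:better} (namely $r\ge 4$) are met; for $r=3$ the asserted bound $r\le 1.5\times10^6$ holds vacuously, so no separate argument is needed at this stage.
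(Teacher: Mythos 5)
Your overall strategy --- playing the lower bound of Lemma \ref{lem:better} against the three upper bounds of Lemma \ref{lem:boundsonx} and solving the resulting inequalities in $r$ alone --- is the same as the paper's, but the lower bound you use is genuinely too weak to reach the stated constant, and this is a real gap, not just ``computational bookkeeping.'' You only use $\kappa\ge 1$, giving $x\gtrsim r^2\log r$. The paper instead combines Lemma \ref{lem:better} with Lemma \ref{Luca1}: $\kappa\ne 1$; if $\kappa=2$ then $x\ge r^{\max\{2,n-3\}}$; and if $\kappa\ge 3$ then $\kappa\ge n/2$. This produces the stronger bound $x>nr^2\log r/2.02$, whose factor $n$ is then \emph{cancelled} against the explicit factor $n$ appearing in \eqref{ineq2} and \eqref{ineq3} by passing to the normalized variable $y:=(x+1)/(n\log(r+1))>r^2/2.03$; after this cancellation $n\le 100$ survives only inside the logarithms. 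In your version the factor $n\le 100$ stays alive multiplicatively on the upper-bound side with no compensating $n$ on the lower-bound side, which costs a factor of up to $n/2=50$ in the resulting bound on $r^2\log r$.

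This is fatal for the constant $1.5\times 10^6$, because the paper's case-\eqref{ineq3} comparison is already essentially tight there: at $r=1.5\times 10^6$ one has $r^2\approx 2.25\times 10^{12}$ against a right-hand side of about the same size. With your weaker lower bound, at $r=1.5\times 10^6$ you get $x>r^2\log r/(1+5/r)\approx 3.2\times 10^{13}$, while the right-hand side of \eqref{ineq3} with $n=100$ is roughly $1.63\times 10^8(\log(r+1))^3\bigl(\log(0.12(x+1)/(\log(r+1))^2)\bigr)^2\approx 2.6\times 10^{14}$ --- no contradiction; the two sides only cross at roughly $r\approx 5\times 10^6$ (and near $3\times 10^6$ for \eqref{ineq2}). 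So your method proves only something like $r<10^7$. A weaker finite bound could in principle feed the subsequent Baker--Davenport reduction over a larger range, but it does not prove the lemma as stated, and the paper's later computation explicitly runs over $r\le 1.5\times 10^6$. The fix is precisely the missing ingredient: invoke Lemma \ref{Luca1} to upgrade $\kappa\ge 1$ to $\kappa\ge n/2$ (treating $\kappa=2$ separately via $x\ge r^{\max\{2,n-3\}}$), and normalize by $n\log(r+1)$ before comparing, as the paper does. (Your other points are sound and match the paper: $r=3$ is indeed vacuous here, and your monotone-function iteration for resolving self-referential inequalities of the shape $x<C(\log(r+1))^a(\log(x+1))^2$ is exactly the paper's increasing-function argument.)
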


\begin{proof}
We assume $r>10^6$. Then $x>\kappa r^2\log r/1.01\ge nr^2\log r/2.02$ by Lemma \ref{Luca1} and Lemma \ref{lem:better}. We go through the three possibilities of Lemma \ref{lem:boundsonx}. In case (i), we get
$$
r^2\log r\le 1.01\times 3.3\times 10^6 (\log (r+1))^2,
$$
which gives $r<5500$, a contradiction. Assume we are in case (ii). Then 
\begin{equation}
\label{eq:interm}
x+1<1.38\times 10^6 \times 1.001 (\log(r+1))^2 \log\left(\frac{0.3(n+1)(x+1)^2}{(\log(r+1))^2}\right).
\end{equation}
The factor $1.001$ is an upper bound on the factor $1+1/(r\log r)$ which is valid since $r$ is large. 
Now 
$$
x+1>x>\frac{nr^2\log r}{2.02}>nr^2 \log(r+1) \left(\frac{\log r}{2.02\log(r+1)}\right)>\frac{nr^2\log(r+1)}{2.03},
$$
where the last inequality holds since $r>10^6$. Put $y:=(x+1)/(n(\log(r+1))$. Then the above inequality is $y>r^2/2.03$. Inequality Eq. \eqref{eq:interm} can be rewritten in terms of $y$ as 
$$
y<1.38\times 10^6 \log(r+1) \times 1.001 \left(\log(0.3n^2(n+1) y^2)\right)^2<1.39\times 10^6 \log(r+1) (2\log y+\log(30030))^2.
$$
We look at the function
$$
f(y):=\frac{y}{(2\log y+\log(30030))^2}.
$$
Its derivative is 
$$
\frac{2\log y+\log(30030)-4}{(2\log y+\log(30030))^3}>0,
$$
so our function $f(y)$ is increasing. Since $f(y)<1.39\times 10^6\log(r+1)$, and $y>r^2/2.03$, it follows that $f(r^2/2.03)<1.39\times 10^6 \log(r+1)$. This gives
$$
r^2<1.39\times 2.03\times 10^6(\log(r+1)) (2\log(r^2/2.03)+\log(30030))^2,
$$
which gives $r<370000$, a contradiction. Assume we are in case (iii). We use the same substitution $y:=(x+1)/(n(\log(r+1))$. We then  get
$$
y<1.64\times 10^6 (\log(r+1))^2 (\log(0.12 n^2 y^2))^2\le 1.64\times 10^6 (\log(r+1))^2(2\log y+\log(12))^2.
$$
The function $g(y):=y/(2\log y+\log(12))^2$ is also increasing, so we deduce that
$$
r^2<2.03\times 1.64\times 10^6 (\log(r+1))^2 (2\log(r^2/2.03)+\log(12))^2,
$$
and this gives $r<1.5\times 10^6$. 
\end{proof}

Having bounds on $n$ and $r$, inequalities (i), (ii) and (iii) from Lemma \ref{lem:boundsonx} become
\begin{eqnarray*}
x & < & 3.3\times 10^6 (\log(1.5\times 10^6))^2<7\times 10^8;\\
x & < & 1.39 \times 10^6\times 100 (\log(1.5\times 10^6))^2 \left(\log(0.3\times 101 (x+1)^2)\right)^2<3.7\times 10^{10} \log(30.3(x+1))^2,\\
x & < & 1.64\times 10^6\times 100\times (\log(1.5\times 10^6))^3\left(\log(0.12(x+1)\right)^2<6.5\times 10^{11} \log(0.12(x+1))^2.
\end{eqnarray*}
Any one of these inequalities implies that $x<3\times 10^{15}$. Now we do Baker-Davenport on estimates Eq. \eqref{Ppx} for $n\in [2,100]$, $r\in [3,1500000]$, and $x<3\times 10^{15}$. This also gives $m<3\times 10^{17}$ via inequality Eq. \eqref{Prob6}. We return to inequality \eqref{Ppx} and rewrite it as follows.
\begin{align}\label{walax}
\left|x\dfrac{\log U_{n+1}}{\log\alpha} -m + \dfrac{\log\left(\sqrt{r^2+4}\right)}{\log \alpha}\right|< \dfrac{2.2}{r^x\log \alpha}.
\end{align}
Then, we apply Lemma \ref{Dujjella} on Eq. \eqref{walax} with the data:
\begin{align*}
M:=3\times 10^{17}, \quad \tau:=\dfrac{\log U_{n+1}}{\log\alpha}, \quad \mu:=\dfrac{\log\left(\sqrt{r^2+4}\right)}{\log \alpha}, \quad A:=\dfrac{2.2}{\log \alpha}, \quad \text{and}\quad B:=r.
\end{align*}
A computer search in \textit{Mathematica} reveals that $ x\le 81 $, which is a contradiction. This computation lasted 16 hours on a cluster of four 16 GB RAM computers.
\subsection{The case $n> 100$}

Estimate Eq. \eqref{Ppx} together with estimate Eq. \eqref{eq:Un} give
$$
|\Gamma|=|m\log\alpha-\log({\sqrt{r^2+4}}-x((n+1)\log \alpha-\log ({\sqrt{r^2+4}}+\zeta'')|<\frac{2.2}{r^x},
$$
which implies, via estimate Eq. \eqref{eq:zeta''}, that
\begin{equation}
\label{eq:bound2}
|(m-x(n+1))\log \alpha+(x-1)\log ({\sqrt{r^2+4}})|<\frac{2.2}{r^x}+\frac{1.51 x}{\alpha^{2n+2}}.
\end{equation}
If $r\ge 4$ then $\kappa>0$, so by Lemma \ref{Luca1}, we have $\kappa\ge 2$. If $\kappa\ge 3$, then 
$$
x\ge\frac{ \kappa (r^2\log r+1)}{1+5/r}>\frac{n4^2\log 4}{4.5}>2n+2.
$$
The same conclusion holds for $\kappa=2$ since then $x\ge r^{\min\{2,n-3\}}\ge 4^{\min\{2,n-3\}}\ge 2n+2$ for all $n\ge 2$.  
We thus get
\begin{equation}
\label{eq:bound3}
|(m-x(n+1))\log \alpha+(x-1)\log ({\sqrt{r^2+4}})|<\frac{4}{r^{2n+2}}+\frac{1.51 x}{\alpha^{2n+2}}<\frac{1.51x+2.2}{\alpha^{2n+2}}.
\end{equation}
We prove the following lemma. 

\begin{lemma}
\label{lemalphaton}
For $n>100$, we have $r^{n-2}>x$. 
\end{lemma}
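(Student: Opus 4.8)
We must show that for $n>100$ we have $r^{n-2}>x$. The natural strategy is to combine the upper bounds on $x$ from Lemma~\ref{lem:boundsonx} with a lower bound on $r^{n-2}$, exploiting that for $n$ this large the exponent $n-2$ forces $r^{n-2}$ to dominate any of the three essentially polynomial-in-$n$, polynomial-in-$\log(r+1)$ bounds on $x$. In effect, $x$ grows at most like $n(\log(r+1))^{2\text{ or }3}(\log(\cdots))^2$, whereas $r^{n-2}$ grows exponentially in $n$ with base $r\ge 3$, so once $n$ is past a fixed threshold the inequality should be forced.

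\textbf{The plan.} I would argue by contradiction, assuming $r^{n-2}\le x$, i.e.\ $x\ge r^{n-2}$, and then derive a violation of each of the three inequalities \eqref{ineq1}, \eqref{ineq2}, \eqref{ineq3} of Lemma~\ref{lem:boundsonx}. For \eqref{ineq1}, the bound $x<3.3\times 10^6(\log(r+1))^2$ is independent of $n$, so $r^{n-2}\le x<3.3\times10^6(\log(r+1))^2$ immediately caps $n$ in terms of $r$; since $r^{n-2}$ beats $3.3\times10^6(\log(r+1))^2$ for every $r\ge 3$ as soon as $n$ exceeds a small absolute constant (certainly for $n>100$), this case is dispatched directly. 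For \eqref{ineq2} and \eqref{ineq3}, I would first simplify the logarithmic factors: using $m<(n+1)(x+1)$ and $n>100$, the argument of the inner logarithm is polynomially bounded, so $\left(\log(\cdots)\right)^2$ is at most a constant times $(\log x)^2$ (or $(\log(n(x+1)))^2$). Feeding in $x\ge r^{n-2}$ turns $\log x\ge (n-2)\log r$, so the right-hand sides become at most a constant times $n(\log(r+1))^{3}\bigl((n-2)\log r + O(\log n)\bigr)^2$. The contradiction then reads, schematically,
\[
r^{n-2}\le x< C\,n\,(\log(r+1))^{3}\bigl(n\log(r+1)\bigr)^{2}=C\,n^{3}(\log(r+1))^{5},
\]
and for $n>100$ and $r\ge 3$ the left side exceeds the right, the point being that $r^{n-2}$ dominates any fixed polynomial in $n$ and in $\log(r+1)$.

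\textbf{Executing the comparison.} The cleanest way to close each polynomial-versus-exponential comparison is to isolate the two regimes. Fix $r\ge 3$; then $r^{n-2}/n^{3}$ is increasing in $n$ for $n$ large, and evaluating at $n=101$ already gives $r^{99}/101^{3}$, which for $r=3$ is astronomically larger than $C(\log 4)^5$. Since the dependence on $r$ is exponential on the left ($r^{n-2}$) but only polylogarithmic on the right ($(\log(r+1))^{5}$), increasing $r$ only strengthens the inequality. Thus a single evaluation at the boundary $n=101$, $r=3$ suffices to show $r^{n-2}>x$ for all $n>100$ and $r\ge 3$ in each of the three cases, completing the proof. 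The main obstacle is purely bookkeeping: correctly bounding the inner logarithms $\log\bigl(0.3(n+1)(x+1)^2/(\log(r+1))^2\bigr)$ and $\log\bigl(0.12(x+1)/(\log(r+1))^2\bigr)$ in terms of $\log x$ without the $x$ on the right outrunning the $x$ on the left. This is handled by the standard device: if $x\ge r^{n-2}$ failed for \eqref{ineq2} or \eqref{ineq3}, substitute $\log(x+1)\le (n-2)\log r+\log 2\le n\log(r+1)$ into the right-hand side, obtaining an explicit upper bound for $x$ that is polynomial in $n$ and $\log(r+1)$, and then compare with $r^{n-2}$ as above. Because the substitution replaces the implicit $x$ entirely by quantities in $n$ and $r$, no self-referential $x$ remains and the exponential-beats-polynomial comparison goes through cleanly.
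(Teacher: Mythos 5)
Your proposal is correct and follows essentially the same route as the paper: assume $x\ge r^{n-2}$, run through the three cases of Lemma~\ref{lem:boundsonx}, use monotonicity of the relevant functions to replace the self-referential $x$ on the right by $r^{n-2}$ (the paper does this via the increasing function $(x+1)/(\log(x+1))^3$ and then clears the residual $n$ with $r^{n/3}>n$), and close each case by the exponential-versus-polylogarithmic comparison checked at the boundary $r=3$. One caution: your intermediate sentence ``feeding in $x\ge r^{n-2}$ turns $\log x\ge (n-2)\log r$, so the right-hand sides become at most\ldots'' has the inequality pointing the wrong way as literally written --- the lower bound on $\log x$ cannot by itself cap the right-hand side --- but your concluding paragraph correctly repairs this by invoking the monotonicity substitution first and only then bounding $\log(x+1)\le (n-2)\log r+\log 2\le n\log(r+1)$, which is exactly the paper's device.
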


Note that Lemmas \ref{Luca1} and \ref{lemalphaton} show that the case $\kappa=2$ cannot occur for $r\ge 4$ provided that $n> 100$.

\begin{proof}
Assume $x\ge r^{n-2}$. We use the bounds given by Lemma \ref{lem:boundsonx} on $x$. In case (i), we get
$$
r^{99}\le r^{n-2}\le 3.3\times 10^6 (\log(r+1))^2.
$$
The function $y\mapsto y^{99}/(\log(y+1))^2$ is increasing for all $y\ge 3$ as one can check by computing its derivative. Thus, if the above inequality holds for $r$, it should hold $r=3$ as well, which is false. In case (ii), we have 
$$
x<1.38\times 10^6 n\left(1+\frac{1}{r\log r}\right)(\log(r+1))^2 \left(\log\left(\frac{0.3(n+1)(x+1)^2}{(\log(r+1))^2}\right)\right)^2.
$$
The expression $1+1/(r\log r)$ is smaller than $1.304$ at $r=3$. Since $1.38\times 1.304<1.8$, we get
$$
x<1.8\times 10^6 (\log(r+1))^2 \left(\log\left(\frac{0.3(n+1)(x+1)^2}{(\log(r+1))^2}\right)\right)^2.
$$
If $x\le n$, then we get $r^{n-2}\le n$ for $r\ge 3$ and $n\ge 101$, which is false. Thus, $n<x$, so we  may use $0.3(n+1)<n<x+1$ to get
\begin{eqnarray*}
x+1 & < & 1.8\times 10^6 n (\log(r+1))^2 (\log((x+1)^3))^2\\
& < & 1.8\times 3^2 \times 10^6 (\log(r+1))^2(\log(x+1))^2\\
& < & 1.7\times 10^7 (\log(r+1))^2 (\log(x+1))^3.
\end{eqnarray*}
The function $(x+1)/(\log(x+1))^3$ is increasing for $x+1>e^3$, which is the case for us since $x\ge r^{99}\ge 3^{99}$. Hence, the above inequality should hold for $x+1$ replaced by $r^{n-2}$, and that gives
$$
r^{n-2}\le 1.7\times 10^7 (\log(r+1))^2 (\log(r^{n-2}))^2<1.7\times 10^7 n^2 (\log(r+1))^4.
$$
Since $r^{n/3}\ge 3^{n/3}>n$ holds for $n>100$, we get that
$$
r^{n/3-2}<1.7\times 10^7 (\log(r+1))^4,
$$
so
$$
r^{95}\le r^{n-6}<\left(1.7\times 10^7 (\log(r+1))^2\right)^3<5\times 10^{21} (\log(r+1))^{12}.
$$
The function $y\mapsto y^{95}/(\log(y+1))^6$ is increasing for $y\ge 3$, so the last inequality should hold also for $r$ replaced by $3$, which is false. A similar argument works if $x$ is in  case (iii) of Lemma \ref{lem:boundsonx}.
We don't give further details. 
\end{proof}

From Lemma \ref{lemalphaton} we conclude that if $r\ge 4$, then inequality Eq. \eqref{eq:bound3} leads to
\begin{equation}
\label{eq:rton}
|(m-x(n+1))\log \alpha+(x-1)\log ({\sqrt{r^2+4}})|<\frac{1}{r^n}.
\end{equation}
We put
\begin{eqnarray*}
\Gamma_2:= (x-1)\log \left(\sqrt{r^2+4}\right)-((n+1)x-m)\log\alpha.  
\end{eqnarray*}
We apply Theorem \ref{Mignotte2} to find a lower bound on $\log |\Gamma_2|$ with the data:
\begin{eqnarray}
t:=2, \quad \gamma_1:=\sqrt{r^2+4}, \quad \gamma_2:=\alpha, \quad b_1:=x-1, \quad b_2:=m-(n+1)x.
\end{eqnarray}
Since $ \gamma_1, \gamma_2 \in\mathbb{Q}(\alpha) $, we take again $ \mathbb{K}:= \mathbb{Q}(\alpha)$ with degree $ {\mathcal D}:=2 $.
The fact that $ \gamma_1 $ and $ \gamma_2 $ are multiplicatively independent has already been checked. 
 
We take 
\begin{eqnarray*}
\max\left\{h(\gamma_1), \dfrac{|\log\gamma_1|}{2}, \dfrac{1}{2}\right\}=\dfrac{1}{2}\log (r^2+4)<\log(r+1):=\log B_1,
\end{eqnarray*}
and 
\begin{eqnarray*}
\max\left\{h(\gamma_2), \dfrac{|\log\gamma_2|}{2}, \dfrac{1}{2}\right\}=\dfrac{1}{2}\log \alpha<\frac{1}{2}\log(r+1):=\log B_2.
\end{eqnarray*}
Thus,
\begin{eqnarray*}
b^{\prime}:=\dfrac{|b_1|}{D\log B_2}+\dfrac{|b_2|}{D\log B_1} = \dfrac{x-1}{2\log (r+1)}+\dfrac{|(n+1)x-m|}{\log(r+1)}<\frac{2.5x}{\log(r+1)},
\end{eqnarray*}
where we used the fact that $m-(n+1)x\in [-2x+2,2]$. 
By Theorem \ref{Mignotte2}, we get that
\begin{equation}
\label{eq:lowGamma2}
\log|\Gamma_2|>-195\left(\max\left\{\log \left(\frac{2.5x}{\log r}\right), 10.5\right\}\right)^{2} (\log(r+1))^2.
\end{equation}
We want an upper bound on $r$. So, assume $r\ge 10^6$. If $ \log ({(2.5x)}/{\log(r+1)}) < 10.5$, then
$$
x<\frac{e^{10.5} \log(r+1)}{2.5}<15000 \log(r+1).
$$
Since $x>nr^2\log r/(2+10/r)\ge 101 r^2\log r/2.02=50r^2\log r$, we get
$$
50 r^2\log r \le x\le 15000 (\log(r+1)),
$$
so 
$$
r^2<\left(\frac{15000}{50}\right) \left(\frac{\log(r+1)}{\log r}\right)<300\times 1.01=303,
$$
so $r\le 17$, a contradiction. 

Assume next that $\log(2.5 x/\log(r+1))>10.5$. We then get
$$
\log |\Gamma_2|>-195 (\log(r+1))^2 \left(\log\left(\frac{2.5 x}{\log(r+1)}\right)\right)^2.
$$
Comparing the above inequality with estimate Eq. \eqref{eq:rton}, we get
$$
n\log r<195 (\log(r+1))^2 \left(\log\left(\frac{2.5 x}{\log(r+1)}\right)\right)^2,
$$
so 
\begin{eqnarray*}
n & < & 195 \left(\frac{\log(r+1)}{\log r}\right) \log(r+1) \left(\log\left(\frac{2.5 x}{\log(r+1)}\right)\right)^2\\
& < & 196 \log(r+1) \left(\log\left(\frac{2.5 x}{\log(r+1)}\right)\right)^2,
\end{eqnarray*}
where we used the fact that $r\ge 10^6$ so $\log(r+1)/\log r<1.0001$. 

We now use the bounds on $x$ given by Lemma \ref{lem:boundsonx}. In case (i), we have
$$
50 r^2\log r\le x\le  3.3\times 10^6 (\log(r+1))^2,
$$
so 
$$
r^2<\left(\frac{3.3\times 10^6}{50}\right)\times \left(\frac{\log (r+1)}{\log r}\right) \log(r+1)<6.7\times 10^4 \log(r+1),
$$
so $r<670$, a contradiction. In case (ii), we have
$$
x+1<1.39\times 10^6 n (\log(r+1))^2 \left(\log\left(\frac{0.3(n+1)(x+1)^2}{(\log(r+1))^2}\right)\right)^2.
$$
In case $(x+1)/(\log(r+1))\le n$, we have 
$$
x+1<196 (\log(r+1))^2 \left(\frac{2.5 x}{\log(r+1)}\right)^2.
$$
So, putting $y:=x/\log(r+1)$, we get $y<196\log(r+1) \log(2.5 y)^2$. Note that 
$$
y=\frac{(x+1)}{\log(r+1)}>\frac{nr^2\log r}{2.01\log(r+1)}\ge \frac{101 r^2}{2.01(\log(r+1)/\log r)}>\frac{101 r^2}{2.02}=50r^2.
$$
In the above inequalities we used the fact that $r\ge 10^6$. 
The function $y\mapsto y/\log(2.5 y)^2$ is increasing for $y>e^2/2.5$, which is our case. Hence, the inequality $y<196\log(r+1) \log(2.5 y)^2$ should hold with 
$y$ replaced by $50r^2$, which yields $50r^2<196 \log(r+1) \log(2.5\times 50 r^2)^2$ and gives $r<50$, a contradiction. Thus, $n<(x+1)/\log(r+1)$. Since $0.3(n+1)<n$, we conclude that 
$$
x+1<1.39\times 10^6 (\log(r+1))^2 \left(196 \log(r+1) \left(\log\left(\frac{2.5 x}{\log(r+1)}\right)\right)^2\right)\left(\log\left(\left(\frac{(x+1)}{\log(r+1)}\right)^3\right)\right)^2.
$$
Putting again $y:=(x+1)/\log(r+1)$, we get
$$
y<1.39\times 10^6\times 196 \times (\log(r+1))^2 \log(2.5 y)^2(3\log y)^2<2.46\times 10^9 (\log(2.5y))^4.
$$
The function $y\mapsto y/(\log(2.5y)^4$ is increasing for our range for $y>50r^2>50\times 10^{16}$, so we get that the above inequality should hold by replacing $y$ by $50r^2$. Thus,
$$
50r^2<2.46\times 10^9 (\log(r+1))^2 (\log(2.5 \times 50 r^2))^4,
$$
so $r<2.6\times 10^8$. A similar argument holds when $x$ is in case (iii). In that case, we may again suppose that $n<(x+1)/\log(r+1)$. We get
$$
x+1<1.64\times 10^6 (\log(r+1))^3\left(196\log(r+1)\left(\log\left(\frac{2.5 x}{\log(r+1)}\right)\right)^2\right)\left(\log\left(\frac{0.12(x+1)}{(\log(r+1))^2}\right)\right)^2,
$$
so
$$
y<3.3\times 10^8 (\log(r+1))^3 (\log(2.5 y))^2(\log(0.12 y))^2.
$$
Imposing that the above inequality holds for $y$ replaced by $50 r^2$, we get
$$
50 r^2<1.3\times 10^9 (\log(r+1))^3 (\log(2.5\times 50 r^2))(\log(0.12 \times 50r^2))^2,
$$
so $r<4.3\times 10^8$. 

To summarise, we have proved the following. 

\begin{lemma}
If $r\ge 4$, then $r<4.3\times 10^8$. 
\end{lemma}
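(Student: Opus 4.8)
The plan is to assemble the case analysis developed above into the single numerical bound claimed, after first observing that the assertion is essentially vacuous for small $r$: whenever $4\le r<10^6$ the inequality $r<4.3\times 10^8$ holds trivially, so the entire content lies in the regime $r\ge 10^6$, which is exactly the regime the preceding estimates target. The driving object is the sharpened linear form $\Gamma_2:=(x-1)\log\sqrt{r^2+4}-((n+1)x-m)\log\alpha$. First I would record that, because Lemma \ref{lemalphaton} gives $x<r^{n-2}$ for $n>100$ and because $\kappa\ge 3$ (the case $\kappa=2$ being excluded for $r\ge 4$, $n>100$), the error terms in estimate Eq. \eqref{eq:bound3} become negligible and one lands on Eq. \eqref{eq:rton}, namely $|\Gamma_2|<r^{-n}$. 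The key feature is that this bound decays in $n$, not merely in $x$, and it is this $n$-decay that will ultimately bound $r$ rather than only $x$.

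Second, I would confront this upper bound with the Baker-type lower bound Eq. \eqref{eq:lowGamma2} coming from Theorem \ref{Mignotte2}. Working under $r\ge 10^6$, the degenerate branch in which the maximum in Eq. \eqref{eq:lowGamma2} equals $10.5$ forces $x<15000\log(r+1)$, which clashes with the lower bound $x>nr^2\log r/(2+10/r)\ge 50r^2\log r$ supplied by Lemma \ref{Luca1} (giving $\kappa\ge n/2$) together with Lemma \ref{lem:better}, yielding $r\le 17$ and a contradiction. In the complementary branch, comparing the two estimates for $\Gamma_2$ produces the clean control $n<196\log(r+1)\bigl(\log(2.5x/\log(r+1))\bigr)^2$.

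Finally I would eliminate $x$ by substituting, in turn, each of the three upper bounds on $x$ from Lemma \ref{lem:boundsonx}, always played against the lower bound $x>50r^2\log r$. Case (i) returns $r<670$, contradicting $r\ge 10^6$; cases (ii) and (iii), after the substitution $y:=(x+1)/\log(r+1)$ and exploitation of the monotonicity of the auxiliary functions $y\mapsto y/(\log(2.5y))^4$ and its analogue on the range $y>50r^2$, deliver the genuine bounds $r<2.6\times 10^8$ and $r<4.3\times 10^8$ respectively. Since for $r\ge 10^6$ every case either contradicts the hypothesis or returns a bound at most $4.3\times 10^8$, and since the bound is trivial for $4\le r<10^6$, the lemma follows by taking the maximum over all cases.

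I expect the main obstacle to be not any isolated computation but securing the decisive estimate $|\Gamma_2|<r^{-n}$ on a legitimate footing, since it silently rests on Lemma \ref{lemalphaton} (hence on all three cases of Lemma \ref{lem:boundsonx}), on the sign fact $\kappa>0$ with its reinforcement $\kappa\ge 3$ from Lemmas \ref{Luca1} and \ref{lem:better}, and on $n\ge 101$. It is precisely the replacement of $x$-decay by $n$-decay in this estimate that converts a linear-form inequality, which on its own would bound only $x$, into a bound on $r$. Once that estimate is in hand, the elimination of $x$ is routine, the only genuine care being the verification that each auxiliary function is increasing on the interval over which it is being inverted.
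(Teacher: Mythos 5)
Your proposal is correct and reproduces the paper's own argument essentially step for step: trivial for $4\le r<10^6$, then for $r\ge 10^6$ the comparison of the upper bound $|\Gamma_2|<r^{-n}$ (via Lemma \ref{lemalphaton} and $\kappa\ge 3$) with the two-logarithm lower bound \eqref{eq:lowGamma2}, the elimination of the degenerate branch through $x>50r^2\log r$, the estimate $n<196\log(r+1)\left(\log\left(2.5x/\log(r+1)\right)\right)^2$, and the case-by-case substitution of Lemma \ref{lem:boundsonx} yielding $r<670$, $r<2.6\times 10^8$, and $r<4.3\times 10^8$. Nothing essential differs from the paper's proof, and your closing remark correctly identifies the $n$-decay of $|\Gamma_2|$ as the step that converts a bound on $x$ into a bound on $r$.
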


Having bounds for $r$ it is easy to find bounds for $x$. For example,
$$
n<
195\log(r+1)(\log(2.5 x))^2<195\log(4.3\times 10^8)(\log(2.5 x))^2<3900(\log(2.5 x))^2.
$$
Next, if $x$ is in case (i), then
$$
x<3.3\times 10^6 (\log(r+1))^2<3.3\times 10^6 (\log(4.3\times 10^8))^2<1.4\times 10^9.
$$
If $x$ is in case (ii), then
\begin{eqnarray*}
x & < & 1.39\times 10^6 (\log(4.3\times 10^8)^2 (3900(\log(2.5 x))^2)(\log(0.3\times 3900 (\log(2.5 x))^2(x+1)^2))^2\\
& < & 2.2\times 10^{12} (\log(2.5 x))^2(\log(1200(x+1)^2\log(2.5x)))^2,
\end{eqnarray*}
which gives $x<5\times 10^{19}$. Finally, if $x$ is as in case (iii), then
\begin{eqnarray*}
x &< & 1.64\times 10^6(\log(4.3\times 10^8)^3\times 3900(\log(2.5x))^2\times (\log(0.12(x+1)))^2\\
& < & 5.1\times 10^{13} (\log(2.5x))^2(\log(0.12(x+1)))^2,
\end{eqnarray*}
so $x<3\times 10^{20}$. 

Thus, $r<4.3\times 10^8$ and $x<3\times 10^{20}$. Inequality Eq. \eqref{eq:rton} gives that 
\begin{align}\label{wala}
\left|\frac{\log {\sqrt{r^2+4}}}{\log \alpha}-\frac{x(n+1)-1}{x-1}\right|<\frac{1}{r^n (x-1)\log \alpha}<\frac{1}{16 (x-1)^2},
\end{align}
where for the last inequality we used that $r^n=r^2 r^{n-2}\ge 16x$ by Lemma \ref{lemalphaton}. In particular, the ratio $(x(n+1)-1)/(x-1)$ is a convergent of $\log {\sqrt{r^2+4}}/\log \alpha$. 
Since $x<3\times 10^{20}< F_{100}$, it follows that $(x(n+1)-1)/(x-1)=p_k/q_k$ for some $k\in [0,99]$.  So, we apply Lemma \ref{Legendre} on Eq. \eqref{wala} with the data:
\begin{align*}
M:=3\times 10^{20}, \quad \tau:=\frac{\log {\sqrt{r^2+4}}}{\log \alpha}, \quad u:=x(n+1)-1,  \quad \text{and} \quad v:=x-1.
\end{align*}
With the help of a computer search in {\it Mathematica}, we checked all these possibilities over all the values for $4\le r\le 4.3\times 10^8$ and found that $ n\le 46 $, which is a contradiction. This computation lasted 6 hours on a cluster of four 16 GB RAM computers.

\subsection{The case $r=3$}

The case $r=3$ is special since there we don't know that $\kappa>0$ so some of the inequalities used for the case $r\ge 4$ do not apply. In the case $n\le 100$, Lemma \ref{lem:boundsonx} 
gives
$$
x<3.3\times 10^6\times (\log 4)^2<6.4\times 10^6,
$$
or 
\begin{eqnarray*}
x & < & 1.38\times 10^8 \times 100 \left(1+\frac{1}{3\log 3}\right) (\log 4)^2 \left(\log\left(\frac{0.3\times 101 (x+1)^2}{(\log 4)^2}\right)\right),\\
& < & 3.5\times 10^{10} (\log(16(x+1)^2))^2,
\end{eqnarray*}
which gives $x<1.7\times 10^{14}$, or 
\begin{eqnarray*}
x & < & 1.63\times 10^6 \times 100 \left(1+\frac{1}{3\log 3}\right)(\log 4)^3 \left(\log\left(\frac{0.12 (x+1)}{(\log 4)^2}\right)\right)^2\\
& < & 5.7\times 10^8 \left(0.07(x+1)\right)^2,
\end{eqnarray*}
which gives $x<3.3\times 10^{11}$. 
Now we can do Baker-Davenport on estimate Eq. \eqref{Ppx} and get better bounds on $x$. In case $n>100$,  estimate Eq. \eqref{eq:bound2} together with Lemma \ref{lemalphaton} 
hold and give 
\begin{equation}
\label{eq:bound4}
|(m-x(n+1))\log \alpha+(x-1)\log ({\sqrt{r^2+4}})|<\frac{2.2}{r^x}+\frac{1.51 x}{\alpha^{2n+2}}<\frac{2.2}{r^x}+\frac{1.51}{r^{n+2}}<\frac{1}{3^{\min\{n-1,x-1\}}}.
\end{equation}
We keep the notation $r$ and $\alpha$ although this section only applies to $r=3$ for which $\alpha=(3+{\sqrt{13}})/2$.  Put $\ell:=\min\{n-1,x-1\}$. The lower bound estimate Eq. \eqref{eq:lowGamma2} still applies and gives
$$
\ell\log 3<-\log |\Gamma_2|<195\left(\max\left\{\log \left(\frac{2.5x}{\log 4}\right), 10.5\right\}\right)^{2} (\log 4)^2.
$$
When $\ell=n-1$, we get
$$
(n-1)\log 3<375  \left(\max\left\{\log \left(\frac{2.5x}{\log 4}\right), 10.5\right\}\right)^{2}.
$$
In case the maximum above is $10.5$ we get $x<16000$ and $n<40000$. If the maximum above is not at $10.5$, we then get
$$
n<\frac{375}{\log 3} \left(\log\left(\frac{2.5 x}{\log 4}\right)\right)^2+1<350(\log(2x))^2.
$$
Going via the possibilities (i), (ii), (iii), we get 
\begin{eqnarray*}
x & < & 3.3\times 10^6 (\log 4)^2<7\times 10^6;\\
x & < & 1.38\times 10^6\left(350 (\log(2x))^2\right) \left(1+\frac{1}{3\log 3}\right)(\log 4)^2 \left(\log\left(\frac{0.3\times 351 (\log 2x)^2 (x+1)^2}{(\log 4)^2}\right)\right)^2\\
& < & 1.3\times 10^9 (\log 2x))^2(\log(55(x+1)^2))^2,
\end{eqnarray*}
which gives $x<2\times 10^{16}$, or finally
\begin{eqnarray*}
x & < & 1.63\times 10^6 (350 (\log(2x))^2)\left(1+\frac{1}{3\log 3}\right)(\log 4)^3 \left(\log\left(\frac{0.12(x+1)}{(\log 4)^2}\right)\right)^2\\
& < & 2\times 10^9 (\log(2x))^2(\log(0.07(x+1)))^2,
\end{eqnarray*}
which gives $x<3.1\times 10^{15}$. So, in all instances, $x<2\times 10^{16}$, and now 
$$
n<350 (\log(2x))^2<6\times 10^5.
$$
Since Lemma \ref{lemalphaton} still applies, it follows that estimate Eq. \eqref{eq:bound4} gives
$$
\left|\frac{\log{\sqrt{r^2+4}}}{\log \alpha}-\frac{x(n+1)-m}{x-1}\right|<\frac{1}{3^{n-1}(x-1)(\log 3)}<\frac{1}{3(x-1)^2},
$$
so again $(x(n+1)-m)/(x-1)=p_k/q_k$ is a convergent of $\log {\sqrt{r^2+4}}/\log \alpha$ with $x<2\times 10^{16}<F_{80}$, so $k\in [0,\ldots,16]$. So, everything works fine if $\ell=n-1$.

In case $\ell=x-1$, one gets
$$
(x-1)\log 3<375 \left(\max\left\{\log \left(\frac{2.5x}{\log 4}\right), 10.5\right\}\right)^{2},
$$
which gives $x<5\times 10^4$. And one wonders how one should finish it off. We can expand another linear form in logarithms which is small, or we may recall the following main theorem 
from \cite{Biluetal}. 

\begin{theorem}
Assume that $s\not\in \{1,2,4\}$ is minimal such that $U_m\mid U_{n+1}^s-U_n^s$. Then $m<20000s^2$.
\end{theorem}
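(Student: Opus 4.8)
The plan is to bound $m$ in two stages: an elementary size estimate reducing everything to a linear control of $n$ by $s$, and an arithmetic analysis of the multiplicative order of $U_{n+1}/U_n$ supplying that control. First I would record the trivial size bound. Since $U_{n+1}>U_n\ge 1$, the integer $U_{n+1}^s-U_n^s$ is positive, so $U_m\mid U_{n+1}^s-U_n^s$ forces $U_m\le U_{n+1}^s$; using $\alpha^{m-2}\le U_m$ and $U_{n+1}\le\alpha^{n}$ from \eqref{Prob4} this gives $m<ns+2$. Hence it suffices to prove a bound of the shape $n\ll s$, after which $m<ns+2<20000s^2$ follows (and if $m<20000$ there is nothing to prove). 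Along the way I would extract the structural facts $\gcd(U_m,U_n)=\gcd(U_m,U_{n+1})=1$: any common factor of $U_m$ and $U_n$ would, through the divisibility, also divide $U_{n+1}^s$, contradicting $\gcd(U_n,U_{n+1})=1$ from \eqref{gcd}. Thus $\gcd(m,n)=\gcd(m,n+1)=1$ and $\lambda:=U_{n+1}/U_n$ is a genuine unit modulo $U_m$.

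Next I would set up the arithmetic. The exact identities $U_{n+1}-\alpha U_n=\beta^{n}$ and $U_{n+1}-\beta U_n=\alpha^{n}$, immediate from \eqref{Prob3}, yield, with $\rho:=\beta/\alpha$, the clean relation $(\lambda-\alpha)/(\lambda-\beta)=\rho^{n}$; and from $\alpha^m-\beta^m=\sqrt\Delta\,U_m$ one gets $\rho^{m}\equiv 1\pmod{U_m}$ in $\mathbb{Z}[\alpha]$. I would then pass to a primitive prime divisor $p$ of $U_m$: assuming $m\ge 20000$ we have $m>12$, and since $r\ge 3$ the sequence $\mathbf U$ is not Fibonacci, so such a $p$ exists by the Primitive Divisor Theorem of \cite{BHV}, with $z(p)=m$. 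Modulo $p$ the Cayley map $T(x)=(x-\alpha)/(x-\beta)$ is a bijection of $\mathbb{P}^1(\mathbb{F}_p)$ onto the norm-one torus $\mu_{p+1}\subset\mathbb{F}_{p^2}^{\times}$, it sends $\lambda\mapsto\rho^{n}$, and $T(r)=\beta/\alpha=\rho$; since $\mathrm{ord}_p(\rho)=z(p)=m$ and $\gcd(m,n)=1$, the element $\lambda=T^{-1}(\rho^{n})$ has torus-order exactly $m$.

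The heart of the matter is the tension between two group laws on $\lambda$: in $(\mathbb{Z}/U_m)^{\times}$ its order is the minimal $s$ under study, while at $p$ it has torus-order $m$. The exclusion $s\notin\{1,2,4\}$ is essential and natural: $s=4$ means $U_m\mid U_{n+1}^2+U_n^2$, which is realized by \eqref{Prob1} with $m=2n+1$, where $m$ grows with $n$ while $s=4$ stays bounded, and $s=1,2$ are its degenerations $U_m\mid U_{n+1}\mp U_n$; for all three, $m$ is genuinely unbounded in terms of $s$, so they must be discarded. For the remaining $s$ I would combine: (a) the equation $T^{-1}(z)^s=1$, i.e.\ $(\alpha-z\beta)^s=(1-z)^s$, is a degree-$s$ polynomial in $z$, so $\rho^{n}$ lies in a set of at most $s$ torus elements, forcing $n$ into at most $s$ residue classes modulo $m$; and (b) minimality of $s$, which makes $U_m$ meet the primitive part $\Phi_s^{*}(U_{n+1},U_n)$ of the auxiliary Lucas-type sequence $W_k:=U_{n+1}^k-U_n^k$ (coprime bases), so that $s$ equals the least common multiple of the orders $\mathrm{ord}_q(\lambda)$ over $q\mid U_m$, each $\equiv 1\pmod{\mathrm{ord}_q(\lambda)}$. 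Crossing these against the primitive prime $p\equiv\pm1\pmod m$ is what I would make quantitative to reach the constant $20000$.

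I expect this last combination to be the main obstacle. The easy inputs give only $m<ns+2$ together with the weak, exponential estimate $m-1\le p<(2\alpha^{n})^{s}$ from sizes, and neither localizes $n$ near $s$. The genuine content is a lower bound $s\gg\sqrt m$ on the multiplicative order of $U_{n+1}/U_n$ modulo $U_m$, forced by the interaction of its ordinary order with the torus-order $m$ coming from $(\lambda-\alpha)/(\lambda-\beta)=\rho^{n}$ at a primitive prime, once the unbounded families $s\in\{1,2,4\}$ are removed. Turning this interaction into the explicit polynomial inequality $m<20000s^2$, with all constants tracked, is precisely the delicate step I expect to be hardest.
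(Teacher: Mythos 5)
First, a point of comparison: the paper does not prove this theorem at all --- it is imported verbatim as ``the following main theorem from \cite{Biluetal}'' (Bilu, Komatsu, Luca, Pizarro-Madariaga, St\u anic\u a), so there is no internal proof to match your attempt against; your argument has to stand on its own, and it does not. The preparatory material is sound: $m<ns+2$ from $\alpha^{m-2}\le U_m\le U_{n+1}^s\le \alpha^{ns}$ via \eqref{Prob4}, the coprimality $\gcd(U_m,U_nU_{n+1})=1$ via \eqref{gcd}, the existence of a primitive prime $p\mid U_m$ once $m>12$ (legitimate since $r\ge 3$ rules out the Fibonacci exception), the fact that $\rho=\beta/\alpha$ has order exactly $m$ modulo $p$, and the relation $T(\lambda)=\rho^n$ for $\lambda\equiv U_{n+1}U_n^{-1}\pmod{U_m}$. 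But the entire content of the theorem is the quantitative step $n\ll s$ (equivalently $s\gg\sqrt{m}$), and this is precisely what you never prove: your closing paragraph explicitly defers it as ``the delicate step I expect to be hardest.'' An outline whose one essential inequality is announced as an expectation is not a proof of $m<20000s^2$.

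Moreover, the gap is structural, not just a matter of untracked constants: the two ingredients (a) and (b) that you propose to ``cross'' hold verbatim for the excluded unbounded families, so they cannot imply the bound even in principle. Take $s=4$ and $m=2n+1$: by \eqref{Prob1} we have $U_m\mid U_n^2+U_{n+1}^2$, so $\lambda$ is a fourth root of unity modulo every prime factor of $U_m$, $T(\lambda)=\rho^n$ with $\rho$ of order $m$ at a primitive prime, and $n$ lies in at most $4$ residue classes modulo $m$ --- all of your (a) and (b) are satisfied, yet $m$ grows without bound while $s$ stays fixed. Hence the hypothesis $s\notin\{1,2,4\}$ enters your argument only rhetorically (``they must be discarded''), whereas any correct proof must deploy it mathematically to kill exactly these root-of-unity coincidences; that is where the actual work in \cite{Biluetal} lies. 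Two further slips: your claim that $T$ is a bijection of $\mathbb{P}^1(\mathbb{F}_p)$ onto the norm-one torus $\mu_{p+1}\subset\mathbb{F}_{p^2}^{\times}$ is valid only when $p$ is inert; for a split primitive prime one has $z(p)\mid p-1$ and the image is $\mathbb{F}_p^{\times}$, so the two cases need separate treatment. And minimality of $s$ gives only $s=\operatorname{lcm}_{q\mid U_m}\operatorname{ord}_q(\lambda)$; it does not force any single prime of $U_m$ to meet the primitive part $\Phi_s^{*}(U_{n+1},U_n)$, since the lcm can be assembled from strictly smaller local orders at different primes. Likewise $p\equiv\pm1\pmod m$ yields only $p\ge m-1$, which you never convert into a usable bound.
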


In our instance, since $U_m=U_n^x+U_{n+1}^x$, one checks that the minimal $s$ is exactly $2x$. Thus, $m<80000x^2$, and since $m>(n-1)x$ by estimate \eqref{Prob6}, we get $n\le 80000x<4\times 10^9$. 
Thus, $n<4\times 10^9$ and $x<5\times 10^4$. It is still a large range and we need to reduce it. 

We consider the element
\begin{eqnarray*}
y:=\dfrac{x}{\alpha^{2n}}.
\end{eqnarray*}
Lemma \ref{lemalphaton} together with the fact that $\alpha>r$ implies that
\begin{eqnarray}\label{Prob14}
y< \dfrac{1}{\alpha^{n}},
\end{eqnarray}
where the last inequality holds for all $ n> 100 $. Now, we write
\begin{eqnarray*}
U_n^x = \dfrac{\alpha^{nx}}{(r^2+4)^{x/2}}\left(1-\dfrac{(-1)^{n}}{\alpha^{2n}}\right)^{x},
\end{eqnarray*}
and
\begin{eqnarray*}
U_{n+1}^x = \dfrac{\alpha^{(n+1)x}}{(r^2+4)^{x/2}}\left(1-\dfrac{(-1)^{n+1}}{\alpha^{2(n+1)}}\right)^{x}.
\end{eqnarray*} 
If $ n $ is odd, then
\begin{eqnarray*}
1< \left(1-\dfrac{(-1)^{n}}{\alpha^{2n}}\right)^{x}= \left(1+\dfrac{1}{\alpha^{2n}}\right)^{x}< e^{y}<1+2y,
\end{eqnarray*}
because $ y $ is very small. On the other hand, if $ n $ is even, then
\begin{eqnarray*}
1> \left(1-\dfrac{(-1)^{n}}{\alpha^{2n}}\right)^{x}= \exp\left(x\log \left(1-\dfrac{1}{\alpha^{2n}}\right)\right)^{x}> e^{-2y}>1-2y,
\end{eqnarray*}
because $ y $ is very small. Thus, the following inequalities hold in both cases,
\begin{eqnarray*}
\left|U_n^x-\dfrac{\alpha^{nx}}{(r^2+4)^{x/2}}\right|< \dfrac{2y\alpha^{nx}}{(r^2+4)^{x/2}},
\end{eqnarray*}
and 
\begin{eqnarray*}
\left|U_{n+1}^x-\dfrac{\alpha^{(n+1)x}}{(r^2+4)^{x/2}}\right|< \dfrac{2y\alpha^{(n+1)x}}{(r^2+4)^{x/2}}.
\end{eqnarray*}
Now, we return to \eqref{Prob2} and rewrite it as
\begin{align*}
\dfrac{\alpha^{m}-\beta^{m}}{(r^2+4)^{1/2}}&=U_m = U_n^x+U_{n+1}^x\\ &= \dfrac{\alpha^{nx}}{(r^2+4)^{x/2}}+\dfrac{\alpha^{(n+1)x}}{(r^2+4)^{x/2}}+\left(U_n^2-\dfrac{\alpha^{nx}}{(r^2+4)^{x/2}}\right)+\left(U_{n+1}^x - \dfrac{\alpha^{(n+1)x}}{(r^2+4)^{x/2}}\right),
\end{align*}
or
\begin{align*}
\left|\dfrac{\alpha^{m}}{(r^2+4)^{1/2}}-\dfrac{\alpha^{nx}(1+\alpha^{x})}{(r^2+4)^{x/2}}\right| &=\left|\dfrac{\beta^{m}}{(r^2+4)^{1/2}} + \left(U_n^x-\dfrac{\alpha^{nx}}{(r^2+4)^{x/2}}\right)+\left(U_{n+1}^x -\dfrac{\alpha^{(n+1)x}}{(r^2+4)^{x/2}}\right)\right|\\
&< \dfrac{1}{\alpha^{m}}+ \left|U_n^x-\dfrac{\alpha^{nx}}{(r^2+4)^{x/2}}\right|+\left|U_{n+1}^x-\dfrac{\alpha^{(n+1)x}}{(r^2+4)^{x/2}}\right|\\ 
&<\dfrac{1}{\alpha^{m}}+2y\left(\dfrac{\alpha^{nx}(1+\alpha^{x})}{(r^2+4)^{x/2}}\right).
\end{align*}
Multiplying both sides of the above inequality by $ \alpha^{-(n+1)x}(r^2+4)^{x/2} $, we obtain that
\begin{eqnarray}\label{kazi1}
\left|\alpha^{m-(n+1)x}(r^2+4)^{(x-1)/2}-(1+\alpha^{-x})\right|< \dfrac{(r^2+4)^{x/2}}{\alpha^{m+(n+1)x}}+2y(1+\alpha^{-x})< \dfrac{1}{2\alpha^{n}}+3y< \dfrac{3}{\alpha^{n}},
\end{eqnarray}
so we may divide both sides of it by $1+\alpha^{-x}$ and get 
$$
\left|\alpha^{m-(n+1)x}(r^2+4)^{(x-1)/2}(1+\alpha^{-x})^{-1}-1\right|<\frac{3}{\alpha^n}.
$$
Since $n>100$, the left--hand side is small (say smaller than $1/2$), so we can pass to a logarithmic form and get
\begin{equation}
\label{eq:last}
|(m-(n+1)x)\log \alpha+(x-1)\log ({\sqrt{r^2+4}})-\log(1+\alpha^{-x})|<\frac{6}{\alpha^n}.
\end{equation}
For us, the parameter $x$ is in $[3,50000]$. Given $x$, we have $m<2\times 10^{14}$. So, this is a suitable inequality to apply Baker-Davenport to. To do so, we rewrite  Eq. \eqref{eq:last} as
\begin{align}\label{walay}
\left|((n+1)x-m)\dfrac{\log \alpha}{\log \left(\sqrt{r^2+4}\right)}-(x-1)+\dfrac{\log (1+\alpha^{-x})}{\log \left(\sqrt{r^2+4}\right)}\right|< \dfrac{6}{\alpha^{n}\log \left(\sqrt{r^2+4}\right)}.
\end{align}
Then, we apply Lemma \ref{Dujjella} on Eq. \eqref{walay} with the data:
\begin{align*}
M:=2\times 10^{14}, \quad \tau:=\dfrac{\log \alpha}{\log \left(\sqrt{r^2+4}\right)}, \quad \mu:=\dfrac{\log (1+\alpha^{-x})}{\log \left(\sqrt{r^2+4}\right)}, \quad A:=\dfrac{6}{\log \left(\sqrt{r^2+4}\right)}, \quad B:=\alpha.
\end{align*}
A computer search in {\it Mathematica} reveals that $ n\le 94 $, which is the final contradiction. This computations lasted a few hours on a cluster four 16 GB RAM computers. \qed

\section*{acknowledgements}
M.~D. was supported by the Austrian Science Fund (FWF) grants: F5510-N26 -- Part of the special research program (SFB), ``Quasi-Monte Carlo Methods: Theory and Applications'' and W1230 --``Doctoral Program Discrete Mathematics''. F.~L. was supported by grant RTNUM19 from CoEMaSS, Wits, South Africa. This paper was written when both authors visited the Max Planck Institute for Mathematics Bonn, in 2019. They thank this institution for the hospitality and the fruitful working environment.

 \end{document}